\DeclareMathOperator*{\essinf}{\operatorname{ess\,inf}}
\DeclareMathOperator*{\argmax}{arg\,max}
\newcommand{\mc}{\mathcal}
\newcommand{\ve}{\mathbf e}
\newcommand{\vi}{\mathbf i}
\newcommand{\vj}{\mathbf j}
\newcommand{\vk}{\mathbf k}
\newcommand{\bbN}{\mathbb{N}}
\newcommand{\bbP}{\mathbb{P}}
\newcommand{\bbR}{\mathbb{R}}
\newcommand{\vtau}{\boldsymbol{\tau}}
\newcommand{\vxi}{\boldsymbol{\xi}}
\newcommand{\ev}[1]{ {\boldsymbol{\mathsf  E}} \left[  #1 \right]}
\DeclareMathOperator{\Cov}{Cov}
\renewcommand{\d}{\mathrm{d}}
\newcommand{\Span}{\mathrm{span}}
\DeclareMathAlphabet{\pazocal}{OMS}{zplm}{m}{n}
\newcommand{\CovOp}{C}
\newcommand{\contFun}{\pazocal{C}}
\newcommand{\rev}[1]{\textcolor{black}{#1}}
\begin{document}

%
%
%
%

\title*{On Expansions and Nodes for Sparse Grid Collocation of Lognormal Elliptic PDEs}
\titlerunning{Expansions and nodes for sparse grid collocation for lognormal elliptic PDE}
\author{Oliver~G.~Ernst, Bj\"orn Sprungk, and Lorenzo Tamellini}
\institute{Oliver~G.~Ernst \at Department of Mathematics, TU Chemnitz, Germany, \email{oernst@math.tu-chemnitz.de}
\and 
Bj\"orn Sprungk \at Faculty of Mathematics and Computer Science, TU Bergakademie Freiberg, Germany, \email{bjoern.sprungk@math.tu-freiberg.de}
\and
Lorenzo Tamellini \at Istituto di Matematica Applicata e Tecnologie Informatiche ``E. Magenes'', Pavia, Consiglio Nazionale delle Ricerche, Italy, \email{tamellini@imati.cnr.it}
}

\maketitle

\abstract{
This work is a follow-up \rev{to our} previous contribution (``Convergence of sparse collocation for functions of
countably many Gaussian random variables (with application to elliptic
PDEs)'', \rev{SIAM J. Numer. Anal.}, 2018), and contains further
insights on some aspects of the solution of elliptic PDEs with
lognormal diffusion coefficients using sparse grids.  Specifically, we
first focus on the choice of univariate interpolation rules,
advocating the use of Gaussian Leja points as introduced by Narayan
and Jakeman (``Adaptive Leja sparse grid constructions for
stochastic collocation and high-dimensional approximation'', \rev{SIAM J. Sci. Comput., 2014}) and then discuss the possible computational advantages of replacing the standard Karhunen-Lo\`eve
expansion of the diffusion coefficient with the L\'evy-Ciesielski
expansion, motivated by theoretical work of Bachmayr, Cohen, DeVore,
and Migliorati (``Sparse polynomial approximation of parametric elliptic PDEs. part II: lognormal coefficients'', \rev{ESAIM: M2AN}, 2016).
\rev{Our numerical results indicate that, for the problem under consideration, Gaussian Leja collocation points outperform Gauss--Hermite and Genz--Keister nodes for the sparse grid approximation and that the Karhunen–Lo\`eve expansion of the log diffusion coefficient is more appropriate than its L\'evy–Ciesielski expansion for purpose of sparse grid collocation.}
}

\section{Introduction} \label{sec:intro}
We consider the sparse polynomial collocation method for approximating the solution of a random elliptic boundary value problem with lognormal diffusion coefficient, a well-studied model problem for uncertainty quantification in numerous physical systems such as stationary groundwater flow in an uncertain aquifer.
The assumption of a lognormal diffusion coefficient, i.e., that its logarithm is a Gaussian random field, \rev{is a common, quite simple approach for modeling uncertain conductivities with large variability in practice (a discussion on this and other, more sophisticated models for the conductivity of aquifers can be found e.g. in \cite{neuman.riva.guad:trunc.power}, empirical evidence for lognormality is discussed in \cite{Freeze1975}),
but already yields an interesting setting from a mathematical  point of view.
For instance, a lognormal diffusion coefficient} introduces challenges, e.g., for stochastic Galerkin methods \cite{gittelson:logn,sarkis:lognormal,MuglerStarkloff2013} due to the unboundedness of the coefficient and the necessity of solving large coupled linear systems. 
By contrast, stochastic collocation based on sparse grids \cite{XiuHesthaven2005,BabuskaEtAl2010,NobileEtAl2008a,NobileEtAl2008b} has been established as a powerful and flexible non-intrusive approximation method in high dimensions for functions of weighted mixed Sobolev regularity.
The fact that solutions of lognormal diffusion problems belong to this function class has been shown under suitable assumptions in \cite{BachmayrEtAl2015}.
Based on the analysis in \cite{BachmayrEtAl2015}, we have established in \cite{ErnstEtAl2018} a dimension-independent convergence rate for sparse polynomial collocation given a mild condition on the univariate node sets.
This condition is, for instance, satisfied by the classical Gauss-Hermite nodes \cite{ErnstEtAl2018}.
In related work, dimension-independent convergence has also been shown for sparse grid quadrature \cite{Chen2016}.

This work is a follow-up on our previous contribution \cite{ErnstEtAl2018} and provides further discussion, insights and numerical results concerning two important design decisions for sparse polynomial collocation applied to differential equations with Gaussian random fields.\par

The first concerns the representation of the Gaussian random field by a series expansion.
A common choice is to use the Karhunen-Lo\`eve expansion \cite{GhanemSpanos1991} of the random field.
Although it represents the spectral, and thus \rev{$L^2$-}optimal, expansion of the input field, it is not necessarily the \rev{most efficient parametrization for approximating} the solution field of the equation. 
In particular, in \cite{BachmayrEtAl2015, BachmayrEtAl2018} the authors \rev{advocate} using wavelet-based expansions with localized basis functions.
A classical example of this type is the L\'evy-Ciesielski (LC) expansion of Brownian motion or a Brownian bridge \cite{Ciesielski1961, BhattacharyaWaymire2016}, which employs hat functions, \rev{whereas the KL expansion of the same random fields results in sinusoidal (hence smoother and globally supported) basis functions.}
A theoretical advantage of localized expansions of Gaussian random fields is that for these it is easier to verify the (sufficient) condition for weighted mixed Sobolev regularity of the solution of the associated lognormal diffusion problem.
In this work, we conduct numerical experiments with the KL and LC expansions of a Brownian bridge as the lognormal coefficient in an elliptic diffusion equation in order to study their relative merits for sparse \rev{grid} collocation of the resulting solution.
We note that finding optimal representations of the random inputs is a topic of ongoing research, see e.g. \cite{Bohn2018,Papaioannou2019,Tipireddy2014}.

The second design decision we investigate is the choice of the univariate polynomial interpolation \rev{node} sequences which form the building blocks of sparse \rev{grid} collocation.
Established schemes are Lagrange interpolation based on Gauss--Hermite or Genz--Keister nodes.
However, the former are non-nested and the latter grow rapidly in number and are only available up to a certain level.
In recent work, weighted Leja nodes \cite{NarayanJakeman2014} have been advocated as a suitable nested and slowly increasing node family for sparse grid approximations, see, e.g., \cite{Loukrezis2019,FarcasEtAl2019, VanDenBosEtal2018} for recent applications in uncertainty quantification.
However, \rev{so far there exist only preliminary results regarding the numerical analysis of weighted Leja points on unbounded domains}, e.g., \cite{JantschEtAl2019}.
We provide numerical evidence that \rev{Gaussian Leja nodes, i.e., weighted Leja nodes with Gaussian weight, satisfy as well} the sufficient condition given in \cite{ErnstEtAl2018} for dimension-independent sparse polynomial collocation.
Moreover, we compare the performance of sparse grid collocation based on Gaussian Leja, Gauss--Hermite and Genz--Keister nodes for the approximation of the solution of a lognormal random diffusion equation.

The remainder of the paper is organized as follows.
In Section \ref{sec:pde} we provide the necessary fundamentals on lognormal diffusion problems and discuss the classical Karhunen--Lo\`eve expansion of random fields and expansions based on wavelets.
Sparse polynomial collocation using sparse grids are introduced in Section \ref{sec:sparse_grids}, where we also recall our convergence results from \cite{ErnstEtAl2018}.
Moreover, we discuss the use of Gaussian Leja points for quadrature and sparse grid collocation in connection with Gaussian distributions in Section \ref{sec:Comp_Nodes}.
Finally, in Section \ref{sec:numerics}, we present our numerical results for sparse polynomial collocation applied to lognormal diffusion problems using the \rev{above-mentioned} univariate node families and \rev{expansion variants} for random fields.
We draw final conclusions in Section \ref{sec:concl}.

\section{Lognormal Elliptic Partial Differential Equations} \label{sec:pde}

We consider a random elliptic boundary value problem on a bounded domain $D\subset \mathbb R^d$ with smooth boundary $\partial D$,
\begin{equation} \label{equ:PDE}
	-\nabla \cdot (a(\omega)\,\nabla u(\omega))
	= f
	\quad \text{in } D , 
	\qquad
	u(\omega) = 0\;\text{on }\partial D,
	\qquad
	\bbP\text{-a.s.~,}
\end{equation}
with a random diffusion coefficient $a:D\times \Omega\to \bbR$ w.r.t.~an underlying probability space  $(\Omega, \mathcal A, \bbP)$.
If $a(\cdot,\omega)\colon D \to \bbR$ satisfies the conditions of the Lax--Milgram lemma  \cite{GilbargTrudinger2001} $\bbP$-almost surely, then a pathwise solution $u:\Omega\to H_0^1(D)$ of \eqref{equ:PDE} exists.
Under suitable assumptions on the integrability of $a_{\min}(\omega):=\essinf_{x\in D} a(x,\omega)$ one can show that $u$ belongs to a Lebesgue--Bochner space \rev{$L^p_\bbP(\Omega; H_0^1(D))$} consisting of all random functions $v\colon \Omega \to H_0^1(D)$ with $\|v\|_{L^p} := \left( \int_\Omega \|v(\omega)\|^p_{H_0^1(D)} \ \bbP(\d \omega)\right)^{1/p}$.

In this paper, we consider \emph{lognormal} random coefficients $a$, i.e., where $\log a\colon D\times \Omega\to\bbR$ is a \emph{Gaussian random field} which is uniquely determined by its mean function $\phi_0\colon D \to \bbR$, $\phi_0(x):= \ev{\log a(x)}$ and its covariance function $c\colon D\times D\to\bbR$,
$c(x,x') := \Cov(\log a(x), \log a(x'))$.
If the Gaussian random field $\log a$ has continuous paths the existence of a weak solution $u\colon \Omega \to H_0^1(D)$ can be ensured.

\begin{proposition}[{\cite[Section 2]{Charrier2012}}] \label{propo:sol_charrier}
Let $\log a$ in \eqref{equ:PDE} be a Gaussian random field with $a(\cdot,\omega)\in \contFun(D)$ almost surely. 
Then a unique solution $u\colon \Omega \to H_0^1(D)$ of \eqref{equ:PDE} exists such that $u \in L^p_\bbP(\Omega; H_0^1(D))$ for any $p >0$.
\end{proposition}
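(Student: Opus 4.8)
The plan is to combine a deterministic, pathwise well-posedness argument based on the Lax--Milgram lemma with a probabilistic integrability argument for the resulting random solution norm. First I would fix a realization $\omega$ for which $a(\cdot,\omega)\in\contFun(D)$ and $\log a(\cdot,\omega)$ is continuous, which is a full-measure event by assumption. Since $\overline{D}$ is compact and $a(\cdot,\omega)=\exp(\log a(\cdot,\omega))$ is continuous and strictly positive, the quantities $a_{\min}(\omega)=\essinf_{x\in D}a(x,\omega)$ and $a_{\max}(\omega):=\esssup_{x\in D}a(x,\omega)$ are attained, finite, and satisfy $0<a_{\min}(\omega)\le a_{\max}(\omega)<\infty$. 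Hence the bilinear form $b_\omega(v,w)=\int_D a(x,\omega)\,\nabla v\cdot\nabla w\,\d x$ is bounded and coercive on $H_0^1(D)$, and the Lax--Milgram lemma yields a unique pathwise weak solution $u(\omega)\in H_0^1(D)$ of \eqref{equ:PDE}.

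The coercivity constant of $b_\omega$ is exactly $a_{\min}(\omega)$, so the standard Lax--Milgram a priori bound gives $\|u(\omega)\|_{H_0^1(D)}\le \|f\|_{H^{-1}(D)}/a_{\min}(\omega)$. Raising this to the $p$-th power and integrating over $\Omega$, the claim $u\in L^p_\bbP(\Omega;H_0^1(D))$ reduces to establishing $\ev{a_{\min}^{-p}}<\infty$ for every $p>0$. Writing $\log a=\phi_0+g$ with $g$ a centered Gaussian field, I would use $a_{\min}(\omega)^{-1}=\exp\big(-\inf_{x\in D}\log a(x,\omega)\big)=\exp\big(\sup_{x\in D}(-\log a(x,\omega))\big)$, so that $a_{\min}^{-p}\le \exp\big(p\|\phi_0\|_{\contFun(D)}\big)\,\exp\big(p\|g\|_{\contFun(D)}\big)$, since $\phi_0$ is continuous and hence bounded on $\overline{D}$.

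The whole argument then hinges on the exponential integrability of the supremum norm of the centered Gaussian field $g$, viewed as a centered Gaussian random element of the separable Banach space $\contFun(D)$ equipped with the supremum norm. This is precisely the content of Fernique's theorem (equivalently the Borell--TIS inequality), which guarantees that $\ev{\exp(\alpha\|g\|^2_{\contFun(D)})}<\infty$ for some $\alpha>0$, and in particular that all exponential moments $\ev{\exp(\lambda\|g\|_{\contFun(D)})}$ are finite for every $\lambda>0$. Combined with the bound of the previous paragraph, this yields $\ev{a_{\min}^{-p}}<\infty$ for all $p>0$ and completes the integrability estimate.

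I expect the main obstacle to be not the deterministic estimate but the probabilistic input, namely verifying the hypotheses of Fernique's theorem: that $\omega\mapsto g(\cdot,\omega)$ is genuinely a centered Gaussian (Radon) measure on the separable space $\contFun(D)$, which is exactly where the continuous-paths assumption is essential. A secondary technical point, which I would treat as routine, is the strong measurability of $\omega\mapsto u(\omega)$ as an $H_0^1(D)$-valued map; this follows from the measurable dependence of the form $b_\omega$ on $\omega$ together with the continuous dependence of the Lax--Milgram solution on the coefficient, for instance via a Galerkin approximation and a limiting argument.
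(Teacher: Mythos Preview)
The paper does not give its own proof of this proposition; it is simply quoted from \cite{Charrier2012}. Your outline---pathwise Lax--Milgram followed by Fernique's theorem to obtain $\ev{\exp(\lambda\|g\|_{L^\infty(D)})}<\infty$ and hence $\ev{a_{\min}^{-p}}<\infty$---is correct and is precisely Charrier's argument. One minor point: you deduce $0<a_{\min}(\omega)\le a_{\max}(\omega)<\infty$ from compactness of $\overline{D}$, but the stated hypothesis is only $a(\cdot,\omega)\in\contFun(D)$ on the \emph{open} set $D$, which by itself does not force boundedness; in Charrier's setting (and in all the examples used here) the Gaussian field is in fact a.s.\ bounded on $D$ (equivalently, a Gaussian element of $\contFun(\overline D)$), so this is a harmless abuse of notation rather than a gap in your strategy.
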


A Gaussian random field $\log a\colon D\times \Omega \to \bbR$ can be represented as a series expansion of the form
\begin{equation} \label{equ:KLE}
	\log a(x, \omega) = \phi_0(x) + \sum_{m\geq1} \phi_m(x) \, \xi_m(\omega), 
	\qquad
	\xi_m\sim \mathsf{N}(0,1) \text{ i.i.d.},
\end{equation}
with suitably chosen $\phi_0, \phi_m \in L^\infty(D)$\rev{, $m\geq 1$}.
In general, several such expansions or expansion bases $\{\phi_m\}_{m\in\bbN}$, respectively, can be constructed, cf. Section \ref{sec:expansions}---thus raising the question of whether certain bases $\{\phi_m\}_{m\in\bbN}$ are better suited for 
parametrizing random fields than others.
Conversely, given an appropriate system $\{\phi_m\}_{m\in\bbN}$, the construction \eqref{equ:KLE} will yield a Gaussian random field if we ensure that the expansion in \eqref{equ:KLE} converges $\bbP$-almost surely pointwise or in $L^\infty(D)$, i.e., that the Gaussian coefficient sequence $(\xi_m)_{m\in\bbN}$ in $\bbR^\bbN$ with distribution $\mu := \bigotimes_{m\in\bbN} N(0,1)$ satisfies
\begin{equation}\label{equ:Gamma}
	\mu(\Gamma) = 1
	\quad \text{ where } \quad
	\Gamma 
	:= 
	\Bigl\{
	\vxi\in\bbR^\bbN:  \| \sum_{m=1}^\infty \phi_m \xi_m\|_{L^\infty(D)} < \infty 
	\Bigr\}.
\end{equation}
We remark that $\Gamma$ is a linear subspace of $\bbR^\bbN$.
The basic condition \eqref{equ:Gamma} is satisfied, for instance, if
\begin{equation} \label{equ:KL_modes}
	\sum_{m\geq1}\|\phi_m\|_{L^\infty(D)} < \infty
\end{equation}
and \eqref{equ:KLE} then yields a Gaussian random variable in $L^\infty(D)$, see \cite[Lemma 2.28]{SchwabGittelson2011} or \cite[Section 2.2.1]{Sprungk2017}.
Given the assumption \eqref{equ:Gamma} we can view the random function $a$ in \eqref{equ:KLE} and the resulting pathwise solution $u$ of \eqref{equ:PDE} as functions in $L^\infty(D)$ and $H_0^1(D)$, respectively, depending on the random parameter $\vxi \in \Gamma$, i.e., $a\colon \Gamma \to L^\infty(D)$ and $u\colon \Gamma \to H_0^1(D)$.
In particular, by the Lax--Milgram lemma we have that $u(\vxi) \in H_0^1(D)$ is well-defined for $\vxi\in\Gamma$ and
\[
	\|u(\vxi)\|_{H_0^1(D)}
	\leq
	\frac {C_D}{a_{\min}(\vxi)} \|f\|_{L^2(D)},
	\qquad
	a_{\min}(\vxi)
	:= \essinf_{x\in D} a(x,\vxi).
\]
In the following subsection, we provide sufficient conditions on the series representation in \eqref{equ:KLE} such that \eqref{equ:Gamma} holds and that the solution $u\colon \Gamma \to H_0^1(D)$ of \eqref{equ:PDE} belongs to a Lebesgue--Bochner space $L^p_\mu(\Gamma; H_0^1(D))$.
Moreover, we discuss the regularity of the solution $u$ of the random PDE \eqref{equ:PDE} as a function of the variable $\vxi\in\Gamma$, which 
\rev{governs approximability} 
by polynomials in $\vxi$. 

\subsection{Integrability and Regularity of the Solution}
A first result concerning the integrability of $u$ given $\log a$ as in \eqref{equ:KLE} is the following.
\begin{proposition}[{\cite[Proposition 2.34]{SchwabGittelson2011}}]\label{propo:sol}
If the functions $\phi_m$, $m\in\bbN$, in \eqref{equ:KLE} satisfy \eqref{equ:KL_modes}, then \eqref{equ:Gamma} holds and the solution $u\colon \Gamma \to H_0^1(D)$ of \eqref{equ:PDE} with diffusion coefficient $a$ as in \eqref{equ:KLE} satisfies $u \in L^p_\mu(\Gamma; H_0^1(D))$ for any $p >0$.
\end{proposition}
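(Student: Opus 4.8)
The plan is to prove the two assertions in turn: first that \eqref{equ:KL_modes} forces $\mu(\Gamma)=1$, and then that the pathwise Lax--Milgram bound, combined with an exponential-moment estimate for a Gaussian random variable, gives $u\in L^p_\mu(\Gamma;H_0^1(D))$ for every $p>0$.

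For the first assertion I would argue by absolute convergence. Since the $\xi_m$ are i.i.d.\ standard Gaussians, $\ev{|\xi_m|}=\sqrt{2/\pi}$, so monotone convergence (Tonelli) gives
\[
  \ev{\,\sum_{m\geq1}\|\phi_m\|_{L^\infty(D)}\,|\xi_m|\,}
  = \sqrt{\tfrac{2}{\pi}}\,\sum_{m\geq1}\|\phi_m\|_{L^\infty(D)} < \infty
\]
under \eqref{equ:KL_modes}. Hence the series $\sum_{m\geq1}\|\phi_m\|_{L^\infty(D)}\,|\xi_m|$ is finite for $\mu$-a.e.\ $\vxi$. Because $\|\sum_{m=M}^N\phi_m\xi_m\|_{L^\infty(D)}\leq\sum_{m=M}^N\|\phi_m\|_{L^\infty(D)}\,|\xi_m|$, the partial sums of $\sum_m\phi_m\xi_m$ are then $\mu$-a.s.\ Cauchy in the complete space $L^\infty(D)$, so their limit lies in $L^\infty(D)$ and $\vxi\in\Gamma$. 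This shows $\mu(\Gamma)=1$, and \eqref{equ:KLE} defines a genuine centered $L^\infty(D)$-valued Gaussian random variable $G:=\sum_{m\geq1}\phi_m\xi_m$.

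For the integrability, the starting point is the a priori bound $\|u(\vxi)\|_{H_0^1(D)}\leq C_D\,a_{\min}(\vxi)^{-1}\|f\|_{L^2(D)}$ recalled above, so that it suffices to bound $\int_\Gamma a_{\min}(\vxi)^{-p}\,\mu(\d\vxi)$. Writing $a(x,\vxi)=\exp(\phi_0(x)+G(x,\vxi))$ and using $\essinf_x a=\exp(\essinf_x\log a)$ gives
\[
  a_{\min}(\vxi)^{-1}
  =\exp\!\Bigl(\esssup_{x\in D}\bigl(-\phi_0(x)-G(x,\vxi)\bigr)\Bigr)
  \leq \e^{\,\|\phi_0\|_{L^\infty(D)}}\,\e^{\,\|G\|_{L^\infty(D)}},
\]
and therefore
\[
  \int_\Gamma a_{\min}(\vxi)^{-p}\,\mu(\d\vxi)
  \leq \e^{\,p\|\phi_0\|_{L^\infty(D)}}\,\ev{\exp\bigl(p\,\|G\|_{L^\infty(D)}\bigr)}.
\]
The crux is thus the finiteness of the exponential moment $\ev{\exp(p\|G\|_{L^\infty(D)})}$ for every $p>0$. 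This is precisely what Fernique's theorem delivers: $G$ is a centered Gaussian random variable taking values in the separable closed subspace of $L^\infty(D)$ generated by $\{\phi_m\}_{m\geq1}$, so there is $\alpha>0$ with $\ev{\exp(\alpha\|G\|_{L^\infty(D)}^2)}<\infty$, and since linear growth is dominated by the quadratic exponent, all moments $\ev{\exp(p\|G\|_{L^\infty(D)})}$ are finite. Combining the two displays yields $\|u\|_{L^p_\mu}<\infty$ for every $p>0$.

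The main obstacle is exactly this exponential integrability of the negative moments of $a_{\min}$; the remaining steps are immediate from \eqref{equ:KL_modes} and Lax--Milgram. Should one wish to avoid invoking Fernique, the same bound follows from the product estimate $\ev{\exp(p\|G\|_{L^\infty(D)})}\leq\prod_{m\geq1}\ev{\exp(p\|\phi_m\|_{L^\infty(D)}|\xi_m|)}$, since each factor equals $1+O(\|\phi_m\|_{L^\infty(D)})$ as $\|\phi_m\|_{L^\infty(D)}\to0$ and the infinite product therefore converges precisely under the summability hypothesis \eqref{equ:KL_modes}.
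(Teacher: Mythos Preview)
The paper does not actually prove this proposition; it is stated with attribution to \cite[Proposition 2.34]{SchwabGittelson2011} and no argument is given in the text, so there is no in-paper proof to compare against. Your proof is correct and follows the standard route: absolute $\ell^1$-summability of $(\|\phi_m\|_{L^\infty})_m$ together with Tonelli gives $\mu(\Gamma)=1$, and the Lax--Milgram a~priori bound reduces the $L^p_\mu$-integrability of $u$ to finiteness of $\ev{\exp(p\|G\|_{L^\infty(D)})}$ for all $p>0$. Both of the justifications you offer for this last step are valid---Fernique applies because $G$ takes values in the separable closed linear span of the $\phi_m$ in $L^\infty(D)$, and the direct product bound works because each factor $\ev{\exp(c|\xi_1|)}=2\e^{c^2/2}\Phi(c)$ is finite for every $c$ and equals $1+O(c)$ for small $c$, so the infinite product converges exactly under \eqref{equ:KL_modes}. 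The product argument is in fact closer in spirit to what one finds in the cited references, while Fernique is the cleaner abstract shortcut.
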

In \cite[Corollary 2.1]{BachmayrEtAl2015} the authors establish the same statements as in Proposition \ref{propo:sol} but under the assumption that there exists a strictly positive sequence $(\tau_m)_{m\in\bbN}$ such that
\begin{equation} \label{equ:KL_modes_Bachmayr}
	\sup_{x\in D} \sum_{m\geq1} \tau_m |\phi_m(x)| < \infty, \qquad 
	\sum_{m\geq1} \exp(-\tau^2_m) < \infty.
\end{equation}
Compared with \eqref{equ:KL_modes}, this relaxes the summability condition if the functions $\phi_m$ have local support.
On the other hand, \eqref{equ:KL_modes_Bachmayr} implies that $(|\phi_m(x)|)_{m\in\bbN}$ decays slightly faster than a general $\ell^1(\bbN)$-sequence due to the required growth of $\tau_m \geq C  \sqrt{\log m}$.

The authors of \cite{BachmayrEtAl2015} 
\rev{further establish} 
a particular weighted Sobolev regularity of the solution $u\colon \Gamma \to H_0^1(D)$ of \eqref{equ:PDE} w.r.t.~$\vxi$ or $\xi_m$, respectively, assuming a stronger version of \eqref{equ:KL_modes_Bachmayr}.
To state their result, we introduce further notation.
We define the partial derivative $\partial_{\xi_m} v(\vxi)$ for a function $v\colon \Gamma \to H_0^1(D)$ by 
\[
	\partial_{\xi_m} v(\vxi)
	:=
	\lim_{h\to0}
	\frac{v(\vxi + h \ve_m ) - v(\vxi)}{h},
\] 
when it exists, where $\ve_m$ denotes the $m$-th unit vector in $\bbR^\bbN$.
Higher derivatives $\partial^k_{\xi_m} v(\vxi)$ are defined inductively.
Thus, for any $k\in\bbN$ we have $\partial^k_{\xi_m}v \colon \Gamma \to H_0^1(D)$, assuming its existence on $\Gamma$.
In order to denote arbitrary mixed derivatives we introduce the set
\begin{equation} \label{multi-indices}
	\mc F := \bigl\{\vk \in \bbN_0^\bbN: |\vk|_0 < \infty \bigr\}, 
	\qquad 
	|\vk|_0 := |\{m \in \bbN: k_m > 0 \}|,
\end{equation}
of finitely supported multi-index sequences $\vk \in \bbN_0^\bbN$.
For $\vk \in \mc F$ we can then define the partial derivative $\partial^\vk v\colon \Gamma \to H_0^1(D)$ of a function $v\colon \Gamma \to H_0^1(D)$ by
\[
	\partial^\vk v(\vxi)
	:=
	\left(\prod_{m\geq 1} \partial^{k_m}_{\xi_m}\right) v(\vxi),
\]
where the product is, in fact, finite due to the definition of $\mc F$.

\begin{remark}
It was shown in \cite{BachmayrEtAl2015} that the partial derivative $\partial^\vk u(\vxi) \in H_0^1(D)$, $\vk\in\mc F$, of the solution $u$ of \eqref{equ:PDE} can itself be characterized as the solution of a variational problem in $H_0^1(D)$:
\[
	\int_D 
	a(\vxi)\,\nabla [\partial^\vk u(\vxi)] \cdot \nabla v\ \d x
	= 
	\int_D 
	\sum_{\vi \lneqq \vk} 
	\begin{pmatrix} \vk\\\vi \end{pmatrix}\ 
	\phi^{\vk-\vi} a(\vxi) \nabla [\partial^\vi u(\vxi)] \cdot \nabla v
	\ \d x
	\quad
	\forall v\in H_0^1(D)
\]
where $\vi \lneqq \vk$ denotes that $i_m \leq k_m$ for all $m\in\bbN$ but $\vi\neq\vk$ 
and $\phi^{\vi}$, $\vi \in \mc F$, is a shorthand notation for the finite product $\prod_{m\geq 1}\phi^{i_m}_m\in L^\infty(D)$.
\end{remark}
We now state the regularity result in \cite{BachmayrEtAl2015} which uses a slightly stronger assumption than \eqref{equ:KL_modes_Bachmayr}.

\begin{theorem}[{\cite[Theorem 4.2]{BachmayrEtAl2015}}]\label{theo:Bachmayr_reg}
Let $r\in\bbN$ and let there exist strictly positive weights $\tau_m >0$, $m\in\bbN$ such that for the functions $\phi_m$, $m\in\bbN$, in \eqref{equ:KLE} and for a $p>0$ we have
\begin{align}\label{equ:KL_Bachmayr_2}
	\sup_{x\in D} 
	\sum_{m\geq1} \tau_m |\phi_m(x)| < \frac{\log 2}{\sqrt r}
	\qquad
	\sum_{m\geq1} \tau^{-p}_m < \infty.
\end{align}
Then the solution $u\colon \Gamma \to H_0^1(D)$ of \eqref{equ:PDE} with coefficient a as in \eqref{equ:KLE} satisfies
\begin{equation} \label{equ:bound_partial_weighted}
	\sum_{\substack{\vk\in\mc F, \\ |\vk|_\infty \leq r}} 
	\frac{\vtau^{2\vk}}{\vk!} \|\partial^\vk u\|^2_{L^2_\mu} 
	< 
	\infty,
	\quad
	\text{ where }
	\vtau^{\vk} = \prod_{m\geq1} \tau_m^{k_m}
	\text{ and }
	\vk! = \prod_{m\geq1} k_m! ~.
\end{equation}
\end{theorem}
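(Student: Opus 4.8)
The plan is to combine the recursive variational characterization of the mixed derivatives $\partial^\vk u$ recalled in the remark above with a pathwise energy estimate, to convert this into a combinatorial recursion for suitably weighted energy norms, and finally to pass from the energy norm to the $H_0^1(D)$-norm and integrate over $\Gamma$. Throughout I fix $\vxi\in\Gamma$ and abbreviate the $a(\vxi)$-weighted energy norm by $\|w\|_a^2:=\int_D a(x,\vxi)\,|\nabla w(x)|^2\,\d x$. First I would insert $v=\partial^\vk u(\vxi)$ into the variational problem for $\partial^\vk u$ and apply the Cauchy--Schwarz inequality in the energy inner product. After dividing by $\|\partial^\vk u(\vxi)\|_a$ this yields, with $\vj=\vk-\vi$,
\[
	\|\partial^\vk u(\vxi)\|_a
	\;\leq\;
	\sum_{\mathbf 0\neq\vj\leq\vk}
	\binom{\vk}{\vj}\,
	\bigl\|\phi^\vj\bigr\|_{L^\infty(D)}\,
	\|\partial^{\vk-\vj}u(\vxi)\|_a ,
\]
expressing the energy of the $\vk$-th derivative through those of strictly lower order.

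The combinatorial summation is the heart of the argument. Normalizing by $\vtau^\vk/\sqrt{\vk!}$ and writing $g_\vk(\vxi):=\tfrac{\vtau^\vk}{\sqrt{\vk!}}\|\partial^\vk u(\vxi)\|_a$, the recursion becomes $g_\vk\leq\sum_{\mathbf 0\neq\vj\leq\vk} w_{\vk,\vj}\,g_{\vk-\vj}$ with multiplier
\[
	w_{\vk,\vj}
	=\frac{1}{\vj!}\,\sqrt{\frac{\vk!}{(\vk-\vj)!}}\;\vtau^\vj\,\bigl\|\phi^\vj\bigr\|_{L^\infty(D)} .
\]
Using $k_m\leq r$ one has $\sqrt{\vk!/(\vk-\vj)!}\leq r^{(\sum_m j_m)/2}$, so that, keeping the product $\phi^\vj=\prod_m\phi_m^{j_m}$ inside the spatial norm, the sum over all $\vj\neq\mathbf 0$ is dominated by $\exp\!\bigl(\sqrt r\,\sum_m\tau_m|\phi_m(\cdot)|\bigr)-1\leq\exp(\sqrt r\,b)-1$ with $b:=\sup_{x}\sum_m\tau_m|\phi_m(x)|$. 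Iterating over the number of elimination steps turns the recursion into a geometric series that converges precisely when $\exp(\sqrt r\,b)<2$, i.e.\ $b<\log 2/\sqrt r$; this is exactly the threshold in \eqref{equ:KL_Bachmayr_2} and explains the appearance of both $\sqrt r$ (from $k_m\leq r$) and $\log 2$ (from the ratio $<2$). One thus obtains a pointwise bound
\[
	\sum_{\substack{\vk\in\mc F,\\|\vk|_\infty\leq r}}
	\frac{\vtau^{2\vk}}{\vk!}\,\|\partial^\vk u(\vxi)\|_a^2
	\;\leq\;
	C(r,b)\,\|u(\vxi)\|_a^2 .
\]
The \textbf{main obstacle} lies here: to invoke the \emph{pointwise} summability $\sup_x\sum_m\tau_m|\phi_m(x)|<\infty$ rather than the far stronger $\sum_m\tau_m\|\phi_m\|_{L^\infty(D)}<\infty$, one cannot bound $\|\phi^\vj\|_{L^\infty}\leq\prod_m\|\phi_m\|_{L^\infty}^{j_m}$ before summing; instead the products $\phi^\vj$ must be retained inside the spatial integral throughout the summation (e.g.\ by splitting $|\phi^\vj|$ across the two factors of the energy inner product via a Young inequality with $\vj$-dependent weights). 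Managing this localized bookkeeping while preserving the clean geometric structure is the delicate step.

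Finally I would use $\|\partial^\vk u(\vxi)\|_{H_0^1(D)}^2\leq a_{\min}(\vxi)^{-1}\|\partial^\vk u(\vxi)\|_a^2$ and the a priori bound $\|u(\vxi)\|_a^2\leq C_D\,a_{\min}(\vxi)^{-1}\|f\|_{L^2(D)}^2$ to turn the above into the pointwise estimate
\[
	\sum_{\substack{\vk\in\mc F,\\|\vk|_\infty\leq r}}
	\frac{\vtau^{2\vk}}{\vk!}\,\|\partial^\vk u(\vxi)\|_{H_0^1(D)}^2
	\;\leq\;
	\frac{C}{a_{\min}(\vxi)^2} .
\]
Integrating over $\Gamma$ with respect to $\mu$ and invoking the Gaussian integrability of $a_{\min}^{-1}$ underlying Propositions \ref{propo:sol_charrier} and \ref{propo:sol} (which guarantee $a_{\min}^{-1}\in L^q_\mu(\Gamma)$ for every $q>0$) makes the right-hand side finite; since every summand is nonnegative, Tonelli's theorem permits interchanging the sum and the integral, yielding \eqref{equ:bound_partial_weighted}. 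In this approach the second hypothesis $\sum_m\tau_m^{-p}<\infty$ is not needed for the weighted energy bound itself and enters only in the downstream polynomial approximation estimates.
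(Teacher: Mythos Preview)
The paper does not prove this theorem; it is quoted verbatim from \cite[Theorem~4.2]{BachmayrEtAl2015} and only its consequences are discussed. There is therefore no proof in the present paper to compare your sketch against.

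That said, your outline follows essentially the strategy of the cited source. The recursive energy estimate obtained by testing the variational identity for $\partial^{\vk} u$ against itself, the normalization $g_{\vk}=\vtau^{\vk}\|\partial^{\vk} u\|_a/\sqrt{\vk!}$, the bound $\sqrt{\vk!/(\vk-\vj)!}\leq r^{|\vj|_1/2}$ leading to the threshold $\e^{\sqrt r\,b}<2$, and the final passage from the energy norm to $H_0^1(D)$ followed by integration over $\Gamma$ are all the correct moves. You also correctly locate the genuine technical crux: one cannot estimate $\|\phi^{\vj}\|_{L^\infty}$ by $\prod_m\|\phi_m\|_{L^\infty}^{j_m}$ before summing, as this would replace the pointwise hypothesis $\sup_{x}\sum_m\tau_m|\phi_m(x)|<\infty$ by the much stronger $\sum_m\tau_m\|\phi_m\|_{L^\infty}<\infty$. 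In \cite{BachmayrEtAl2015} this is handled by keeping the spatial integral throughout and arranging the summation so that the product $\prod_m \e^{\sqrt r\,\tau_m|\phi_m(x)|}$ arises \emph{inside} the integral before any supremum is taken. Your sketch names this obstacle but does not actually carry out the bookkeeping, so the argument as written has a gap precisely at the step you flag as ``the main obstacle''.

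One correction to your closing remark: the second hypothesis $\sum_m\tau_m^{-p}<\infty$ is not merely cosmetic here. It forces $\tau_m\to\infty$ and in fact implies $\sum_m \e^{-\tau_m^2}<\infty$ (since $\e^{-t^2}\leq t^{-p}$ for large $t$), which is what yields $\mu(\Gamma)=1$ and the $L^q_\mu$-integrability of $a_{\min}^{-1}$ via \eqref{equ:KL_modes_Bachmayr}. Without it, the first hypothesis alone does not justify the final integration over $\Gamma$ that you invoke through Propositions~\ref{propo:sol_charrier} and~\ref{propo:sol}.
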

This theorem tells us that, given \eqref{equ:KL_Bachmayr_2}, the partial derivatives $\partial^\vk u\colon \Gamma \to H_0^1(D)$ exist for any $\vk \in \mc F$ with $|\vk|_\infty < \infty$ and belong to $L^2_\mu(\Gamma; H_0^1(D))$.
Moreover, their $L^2_\mu$-norm decays faster than $\vtau^{-2\vk}$---otherwise \eqref{equ:bound_partial_weighted} would not hold.
In particular, Theorem \ref{theo:Bachmayr_reg} establishes a \emph{weighted 
mixed Sobolev regularity} of the solution $u\colon \Gamma \to H_0^1(D)$ of maximal degree $r\in\bbN$ and with increasing weights $\tau_m \geq C m^{1/p}$.
As it turns out, it is such a regularity which ensures dimension-independent convergence rates for polynomial sparse grid collocation approximations---see the next section.

Moreover, the condition \eqref{equ:KL_Bachmayr_2} seems to favor localized basis functions $\phi_m$ for which $\sum_{m=1}^\infty \tau_m |\phi_m(x)|$ reduces to a summation over a subsequence $\sum_{k=1}^\infty \tau_{m_k} |\phi_{m_k}(x)|$ such that \eqref{equ:KL_Bachmayr_2} is easier to verify.
In view of this, the authors of \cite{BachmayrEtAl2015,BachmayrEtAl2018} proposed using wavelet-based expansions for Gaussian random fields with sufficiently localized $\phi_m$ 
in place of the globally supported eigenmodes $\phi_m$ in the Karhunen--Lo\`eve (KL) expansion. 
In fact, condition \eqref{equ:KL_Bachmayr_2} fails to hold for the KL expansion of some rough Gaussian processes (Example \ref{exam:BB} below), but can be established if the process is sufficiently smooth (Example \ref{exam:smoothBB}).
We will discuss KL and wavelet-based expansions of Gaussian processes in more detail in the next subsection.

\subsection{Choice of Expansion Bases}\label{sec:expansions}

Given a Gaussian random field $\log a\colon D\times \Omega\to\bbR$ with mean $\phi_0\colon D\to\bbR$ and covariance \rev{function} $c\colon D\times D\to\bbR$ we seek a representation as an expansion \eqref{equ:KLE}.
We explain in the following how such expansions can be derived in general.
To this end, we assume that the random field has $\bbP$-almost surely continuous paths, i.e., $\log a\colon \Omega \to \contFun(D)$, and a continuous covariance function $c \in \contFun(D\times D)$.
Thus, we can view $\log a$ also as a Gaussian random variable with values in the separable Banach space $\contFun(D)$ or, by continuous embedding, with values in the separable Hilbert space $L^2(D)$.
The covariance operator $\CovOp \colon L^2(D) \to L^2(D)$ of the random variable $\log a\colon \Omega \to L^2(D)$ is then given by $(\CovOp f)(x) := \int_D c(x,y) \ f(y)\ \d y$.
This operator is \rev{of} trace class and induces a dense subspace $\mc H_{\CovOp} := \mathrm{range}\ \CovOp^{1/2} \subset L^2(D)$, which equipped with the inner product $\langle u,v \rangle_\CovOp := \langle \CovOp^{-1/2} u, \CovOp^{-1/2} v \rangle_{L^2(D)}$, forms again a Hilbert space, called the \emph{Cameron--Martin space} (CMS) of $\log a$.
The CMS plays a crucial role for series representations \eqref{equ:KLE} of $\log a$.
Specifically, it is shown in \cite{LuschgyPages2009} that \eqref{equ:KLE} holds almost surely in $C(D)$ if and only if the system $\{\phi_m\}_{m\in\bbN}$ is a so-called \emph{Parseval frame} or \emph{(super) tight frame} in the CMS of $\log a$, i.e., if $\{\phi_m\}_{m\in\bbN} \subset \mc H_{\CovOp}$ and
\[
	\sum_{m\geq 1} \left| \langle \phi_m, f \rangle_{\CovOp} \right|^2
	=
	\|f\|^2_{\CovOp}
	\qquad
	\forall f \in \mc H_\CovOp.
\]
We discuss two common choices for such frames below.

\paragraph{\bf Karhunen--Lo\`eve expansions.}
This expansion is based on the eigensystem $(\lambda_m,\psi_m)_{m\in\bbN}$ of the compact and self-adjoint covariance operator $\CovOp\colon L^2(D)\to L^2(D)$ of $\log a$.
Thus, let $\psi_m \in L^2(D)$ satisfy $\CovOp\psi_m = \lambda_m \psi_m$ with $\lambda_m > 0$.
Since \rev{the covariance function $c$ is a continuous function on $D\times D$}, we have $\psi_m \in \contFun(D)$ and \eqref{equ:KLE} holds almost surely in $\contFun(D)$ with $\phi_m := \lambda^{-1/2}_m \psi_m$, because $\{\phi_m\}_{m\in\bbN} \subset \mc H_{\CovOp}$ is a complete orthonormal system (CONS) of $\mc H_{\CovOp}$.
In fact, the KL basis $\{\phi_m\}_{m\in\bbN}$ represents the only CONS of $\mc H_{\CovOp}$ \rev{that} is also $L^2(D)$-orthogonal. 
In addition, as the spectral expansion of $\log a$ in $L^2_\bbP(L^2(D))$, it is the optimal basis in this space in the sense that the truncation error after $M$ terms $\|\log a - \phi_0 - \sum_{m=1}^M \phi_m \xi_m\|_{L^2_\bbP(L^2(D))}$ is the smallest among all truncated expansions of length $M$ of the form
\[
	\log a(x,\omega) = \phi_0(x) + \sum_{m=1}^M \tilde \phi_m(x) \tilde \xi_m(\omega).
\]
Under additional assumptions the KL expansion also yields optimal rates of the truncation error in $L^2_\bbP(\contFun(D))$, see again \cite{LuschgyPages2009}.
However, the KL \emph{modes} $\phi_m$ typically have global support on $D$, which often makes it difficult to verify a condition like \eqref{equ:KL_Bachmayr_2}.
Nonetheless, for particular covariance functions, such as the Mat\'ern kernels, bounds on the norms $\|\phi_m\|_{L^\infty(D)}$ are known, see, e.g., \cite{GrahamEtAl2015}.
 
\paragraph{\bf Wavelet-based expansions.}
We now consider expansions in orthonormal wavelet bases $\{\psi_m\}_{m\in\bbN}$ of $L^2(D)$.
Given a factorization $\CovOp = SS^*$, $S\colon L^2(D)\to L^2(D)$, of the covariance operator $\CovOp$ (e.g., $S = S^* = \CovOp^{1/2}$), we can set $\phi_m := S \psi_m$ and obtain a CONS $\{\phi_m\}_{m\in\bbN}$ of the CMS $\mc H_{\CovOp}$, see \cite{LuschgyPages2009}.
Thus, \eqref{equ:KLE} holds almost surely in $\contFun(D)$ with $\phi_m = S \psi_m$.  
The advantage of wavelet-based expansions is that the resulting $\phi_m$ often inherit the localized behavior of the underlying $\psi_m$, cf.  Example \ref{exam:BB}, which then facilitates verification of the sufficient condition \eqref{equ:KL_Bachmayr_2} for the weighted Sobolev regularity of the solution $u$ of \eqref{equ:PDE}.
For instance, we refer to \cite{BachmayrEtAl2018} for Meyer wavelet-based expansions of Gaussian random fields with Mat\'ern covariance functions satisfying \eqref{equ:KL_Bachmayr_2}.
There, the authors use a periodization approach and construct the $\phi_m$ via their Fourier transforms.
Further work on constructing and analyzing wavelet-based expansions of Gaussian random fields includes, e.g., \cite{ElliottMajda1994, ElliottEtAl1997, BenassiEtAl1997}.

\begin{example}[Brownian bridge]\label{exam:BB}
A simple but useful example is the standard \emph{Brownian bridge} $B\colon D\times \Omega \to \bbR$ on $D=[0,1]$.
This is a Gaussian process with mean $\phi_0 \equiv 0$ and covariance function $c(x,x') = \min(x,x') - xx'$.
The associated CMS is given by $\mc H_{\CovOp} = H_0^1(D)$ with $\langle u, v \rangle_{\CovOp} = \langle \nabla u, \nabla v\rangle_{L^2(D)}$ and we have $\CovOp = SS^*$ with
\[
	Sf(x) := \int_0^1 \left( \mathbf{1}_{[0, x]}(y) - x\right) f(y) \, \d y, 
	\qquad f\in L^2(D).
\]
The KL expansion of the Brownian bridge is given by
\begin{equation}\label{equ:BB_KLE}
	B(x,\omega)
	=
	\sum_{m\geq 1} \frac{\sqrt2}{\pi m} \sin(\pi m x) \xi(\omega),
	\qquad
	\xi_m \sim \mathsf{N}(0,1)\text{ i.i.d.~,}
\end{equation}
i.e., we have $\phi_m(x) = \frac{\sqrt2}{\pi m} \sin(\pi m x)$ and $\|\phi_m\|_{L^\infty(D)} = \frac{\sqrt2}{\pi m}$.
Although the \rev{functions} $\phi_m$ do not satisfy the assumptions of Proposition \ref{propo:sol}, existence and integrability of the solution $u$ of \eqref{equ:PDE} for $\log a = B$ is guaranteed by Proposition \ref{propo:sol_charrier}, since $B$ has almost surely continuous paths.
Concerning the condition \eqref{equ:KL_Bachmayr_2} it can be shown that $\sum_{m\geq1} \tau_m |\phi_m(x)| $ converges pointwise to a (discontinuous) function if $\tau_m \in o(m^{-1})$, i.e., $(\tau_m^{-1})_{m\in\bbN} \in \ell^p(\bbN)$ only for a $p > 1$, see the Appendix. 
However, this function turns out to be unbounded in a neighborhood of $x=0$ if $(\tau_m^{-1})_{m\in\bbN} \in \ell^p(\bbN)$ for $p\leq 2$,
and numerical evidence suggests that it is also unbounded if $(\tau_m^{-1})_{m\in\bbN} \in \ell^p(\bbN)$ for $p>2$, again see the Appendix.
Thus, the KL expansion of the Brownian bridge does not satisfy the conditions of Theorem \ref{theo:Bachmayr_reg} for the weighted Sobolev regularity of $u\colon \Gamma \to H_0^1(D)$.

Another classical series representation of the Brownian bridge is the \emph{L\'evy--Ciesielski expansion} \cite{Ciesielski1961}.
This wavelet-based expansions uses the Haar wavelets $\psi_m(x) = 2^{\ell/2} \psi(2^\ell x - j)$ where $\psi(x) = \textbf{1}_{[0,1/2]}(x) - \textbf{1}_{(1/2,1]}(x)$ is the mother wavelet and $m = 2^\ell + j$ for level $\ell \geq 0$ and shift $j=0,\ldots,2^\ell-1$.
Since the Haar wavelets form a CONS of $L^2(D)$ we obtain a Parseval frame of the CMS of the Brownian bridge by taking $\phi_m = S\psi_m$, which yields a Schauder basis consisting of the hat functions
\begin{equation}\label{equ:BB_LCE}
	\phi_m(x) := 2^{-\ell/2} \phi(2^\ell x - j),
	\quad
	\phi(x) := \max(0, 1-|2x-1|), 
	\quad m = 2^\ell + j,
\end{equation}
with $j = 0,\ldots,2^{\ell}-1$ and $\ell\geq0$.
Hence, for $\log a = B$ the series representation \eqref{equ:KLE} also holds almost surely in $\contFun(D)$with $\phi_m$ as in \eqref{equ:BB_LCE}, see also \cite[Section IX.1]{BhattacharyaWaymire2016}.
Moreover, we have $\|\phi_m\|_{L^\infty} = 2^{-\lfloor \log_2 m \rfloor/2}$, resulting in $\sum_{m\geq 1} \|\phi_m\|_{L^\infty} = \infty$.
On the other hand, due to the localization of $\phi_m$ we have \rev{that} for any fixed $x\in D$ and each level $\ell \geq0$ there exists only one $k_\ell \in \{0,\ldots,2^\ell-1\}$ such that $\phi_{2^\ell + k_\ell}(x) \neq 0$.
In particular, it can be shown that the LC expansion of the Brownian bridge satisfies the conditions of Theorem \ref{theo:Bachmayr_reg} for any $p > 2$, since for $\tau_m = \kappa^{\lfloor \log_2 m \rfloor}$ with $|\kappa|< \sqrt2$ we get
\[
	\sum_{m\geq1} \kappa^{\lfloor \log_2 m \rfloor}  |\phi_m(x)|
	=
	\sum_{l\geq0} \kappa^{\ell/2} |\phi_{2^\ell + k_\ell}(x)|
	\leq
	\sum_{l\geq0} (\sqrt{0.5}\rho)^{\ell} 
	< \infty
\]
and for $p > \log_\kappa 2 > 2$
\[
	\sum_{m\geq1}  \tau_m^{-p}
	=
	\sum_{l\geq0}  2^l 	\kappa^{-\ell p} 
	=
	\sum_{l\geq0}  \left( 2\kappa^{-p} \right)^\ell
	< \infty.
\]
\end{example}

\begin{example}[Smoothed Brownian bridge]\label{exam:smoothBB}
Based on the explicit KL expansion of the Brownian bridge we can construct Gaussian random fields with smoother realizations by
\begin{equation}\label{equ:BB_KLE_smooth}
	B_q(x,\omega)
	=
	\sum_{m\geq 1} \frac{\sqrt2}{(\pi m)^q} \sin(\pi m x) \xi(\omega),
	\qquad
	\xi_m \sim \mathsf{N}(0,1)\text{ i.i.d.~,}
	\quad q > 1.
\end{equation}
Now, the resulting $\phi_m = \frac{\sqrt2}{(\pi m)^q} \sin(\pi m \cdot)$ indeed satisfy the assumptions of Proposition \ref{propo:sol} for any $q >1$, since $\|\phi_m\|_{L^\infty(D)} \propto m^{-q}$.
Moreover, for $p>\frac 1{q-1}$ the expansion \eqref{equ:BB_KLE_smooth} satisfies the assumptions of Theorem \ref{theo:Bachmayr_reg} with $\tau_m = m^{(1+\epsilon)/p}$ for sufficiently small $\epsilon$.
For this Gaussian random field $B_q$ the covariance function is given by $c(x,y) = 2 \sum_{m\geq 1} (\pi m)^{-2q} \sin(\pi m x)\sin(\pi m y)$ and we can express $\CovOp^{1/2}$ via
\[
	\CovOp^{1/2} f(x) = \int_D k(x,y)\ f(y)\ \d y,
	\qquad
	k(x,y) = 2\sum_{m\geq 1} (\pi m)^{-q} \sin(\pi m x)\sin(\pi m y).
\]
Thus, we could construct alternative expansion bases for $B_q$ via $\phi_m = \CovOp^{1/2}\psi_m$ given a wavelet CONS $\{\psi_m\}_{m\in\bbN}$ of $L^2(D)$.
However, in this case the resulting $\phi_m$ do not necessarily have a localized support.
For instance, when taking Haar wavelets $\psi_m$ the $\CovOp^{1/2}\psi_m$ have global support in $D=[0,1]$, see Fig. \ref{fig:Phi_Wavelet}.
\end{example}

\begin{figure}
\centering
\includegraphics[width=.3\textwidth]{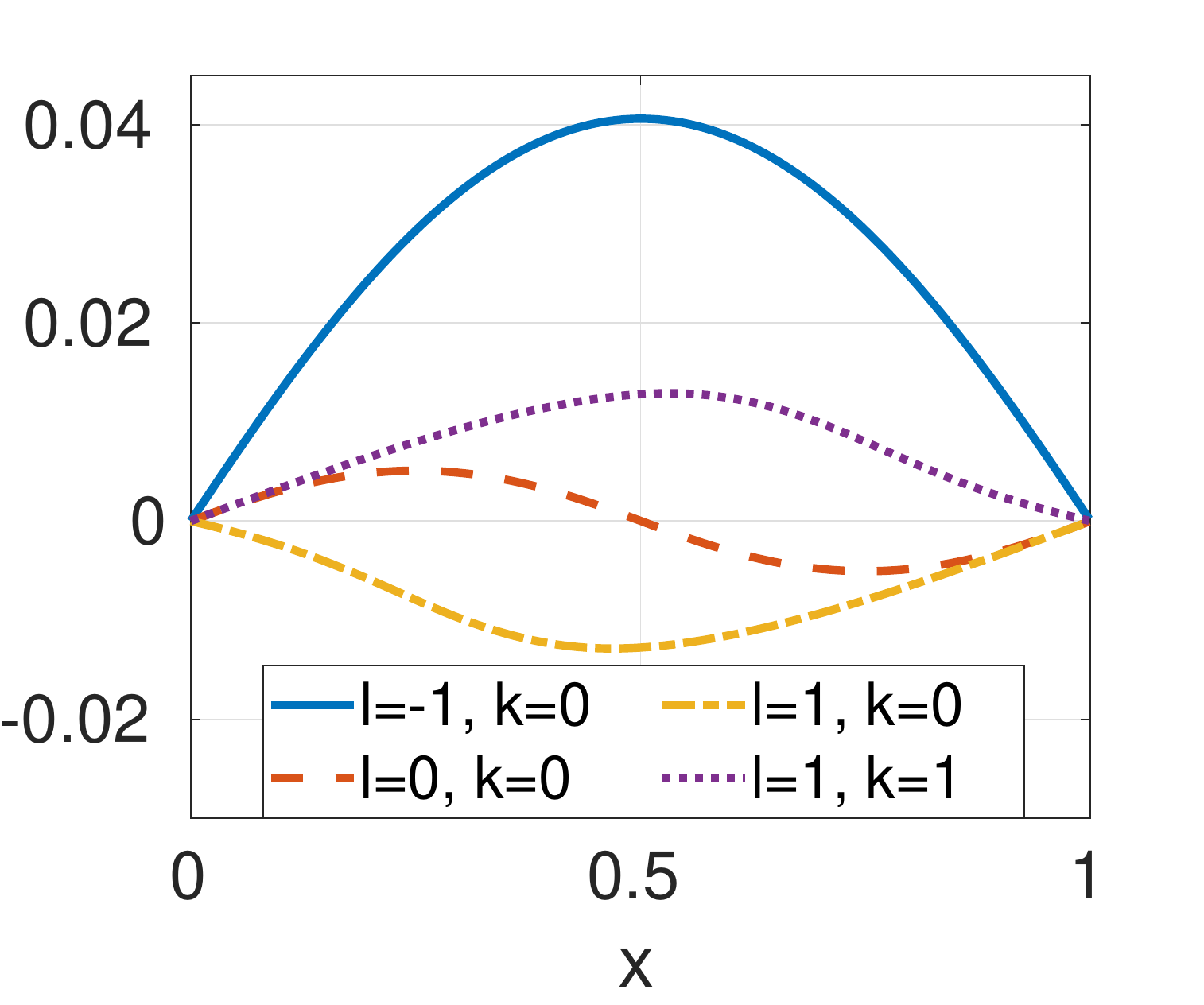}
\hfill
\includegraphics[width=.3\textwidth]{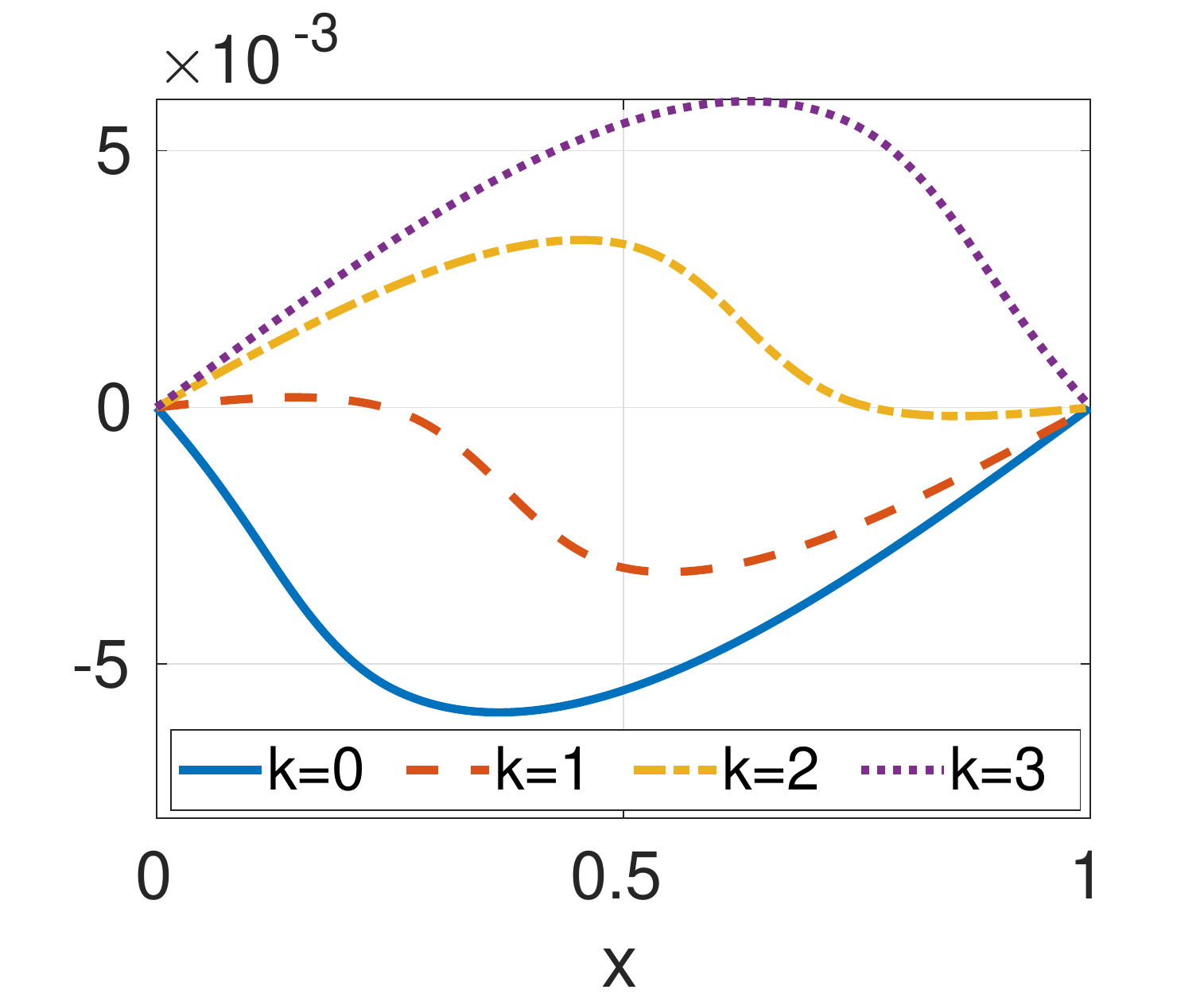}
\hfill
\includegraphics[width=.3\textwidth]{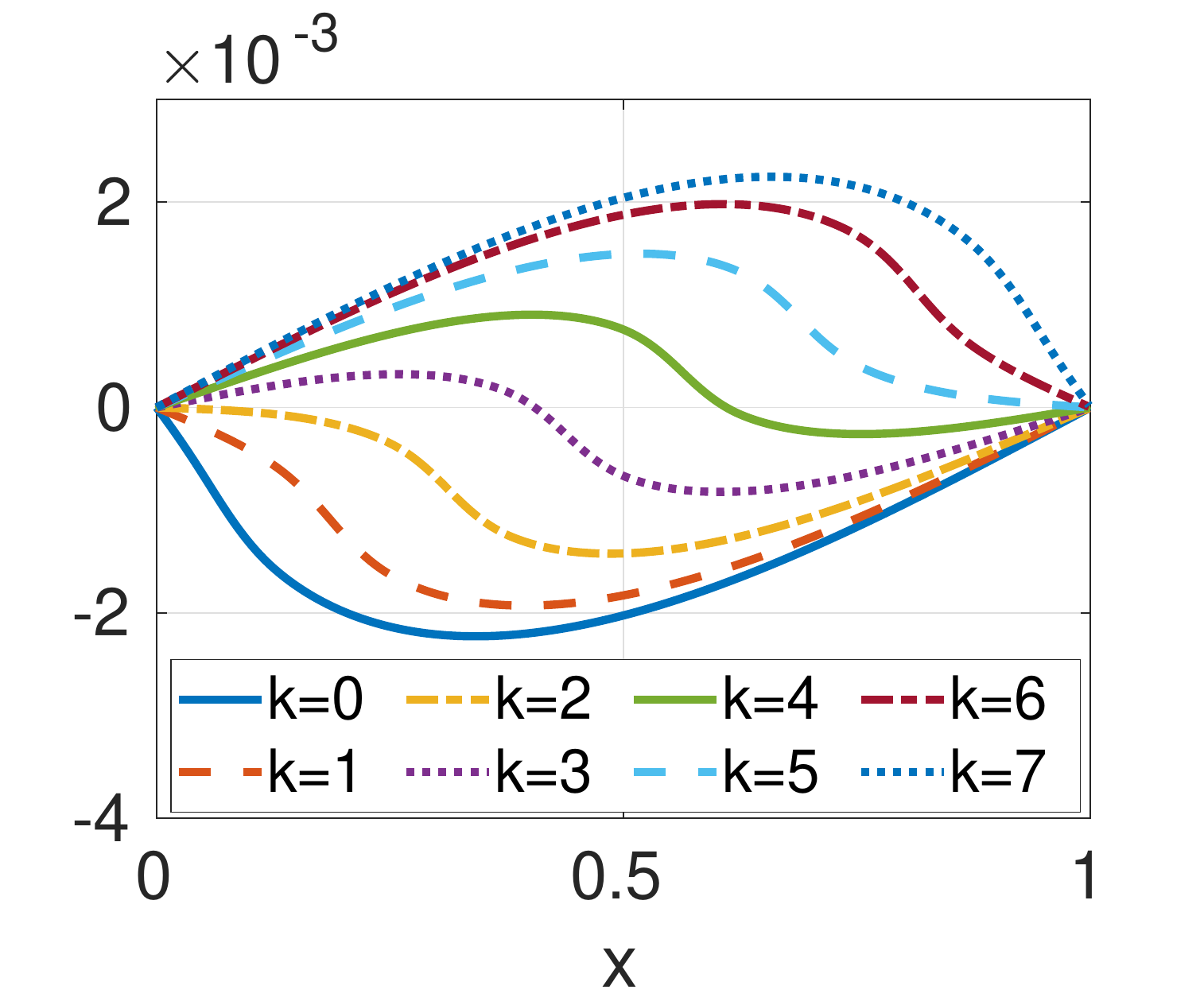}
\caption{Expansion functions resulting from applying $\CovOp^{1/2}$ as in Example \ref{exam:smoothBB} for $q=3$ to the Haar wavelets $\psi_{m}$, $m= 2^\ell+k$, with level $\ell \in \{-1,0,1\}$ (left), $\ell=2$ (middle), and $\ell = 3$ (right).}
\label{fig:Phi_Wavelet}
\end{figure}

\section{Sparse Grid Approximation} \label{sec:sparse_grids}

In \cite{ErnstEtAl2018} we presented a solution approach for solving random elliptic PDEs based on sparse polynomial collocation derived from tensorized interpolation at Gauss-Hermite nodes.
The problem is cast as that of approximating the solution $u$ of \eqref{equ:PDE} as a function $u:\Gamma \to H_0^1(D)$ by solving for realizations of $u$ associated with judiciously chosen \emph{collocation points} $\{\vxi_j\}_{j=1}^N \subset \Gamma$.

Sparse polynomial collocation operators are constructed by tensorizing univariate Lagrange interpolation sequences $(U_k)_{k \in \mathbb N_0}$ defined as
\begin{equation}\label{eq:U_k-def}
	(U_k f)(\xi) = \sum_{i=0}^k f(\xi_{i}^{(k)}) \, L_i^{(k)}(\xi), 
	\qquad 
	f\colon \bbR \to \bbR, 
\end{equation}
where $\{L_i^{(k)}\}_{i=0}^k$ denote the Lagrange fundamental polynomials of degree $k$ associated with a set of $k+1$ distinct interpolation nodes $\Xi^{(k)} := \bigl\{ \xi_{0}^{(k)},\xi_{1}^{(k)},\dots,\xi_{k}^{(k)} \bigr\} \subset \bbR$ and $L_0 \equiv 1$.
For any $\vk \in \mc F$ (cf.\ \eqref{multi-indices}),  the associated tensorized Lagrange interpolation operator $U_\vk := \bigotimes_{m \in \bbN} U_{k_m}$ is given by 
\begin{equation} \label{eq:tensor-int}
   (U_\vk f)(\vxi)
   =
   \left(\bigotimes_{m \in \bbN} U_{k_m} f \right)(\vxi)
   =
   \sum_{\vi \le \vk} f(\vxi_{\vi}^{(\vk)}) L_\vi^{(\vk)}(\vxi),
  \qquad
  f:\bbR^\bbN \to \bbR,
\end{equation}
in terms of the tensorized Lagrange 
polynomials 
$L_\vi^{(\vk)}(\vxi) := \prod_{m \in \mathbb N} L_{i_m}^{(k_m)}(\xi_m)$ with multivariate interpolation nodes
$\vxi_\vi^{(\vk)}  \in     \Xi^{(\vk)} := \bigtimes_{m \in \bbN} \Xi^{(k_m)}$.
We thus have $U_\vk: \bbR^\Gamma \to \mc Q_\vk$, where
\[
    \mc Q_\vk := \Span \{ \vxi^\vi : 0 \le i_m \le k_m, m \in \bbN \},
    \qquad \vk \in \mc F,
\]
denotes the multivariate tensor-product polynomial space of maximal degree $k_m$ in the $m$-th variable in the countable set of variables $\vxi = (\xi_m)\in \bbR^\bbN$.

Sparse polynomial spaces can be constructed by tensorizing the univariate \emph{detail operators} 
\begin{equation} \label{equ:detail}
	\Delta_k := U_k - U_{k-1}, \quad k\geq 0,
	\qquad U_{-1} :\equiv 0,
\end{equation}
giving
\[
	\Delta_{\vk} := \bigotimes_{m \in \bbN} \Delta_{k_m} \colon \bbR^\Gamma \to \mc Q_{\vk}.
\]
A sparse polynomial collocation operator is then obtained by fixing a suitable set of multi-indices $\Lambda \subset \mc F$ and setting
\begin{equation} \label{equ:U_Lambda}
	U_{\Lambda} := \sum_{\vi \in \Lambda} \Delta_{\vi} : \bbR^\Gamma \to \mc P_{\Lambda},
	\qquad \text{ where }
	\mc P_{\Lambda} := \sum_{\vi \in \Lambda} \mc Q_\vi.
\end{equation}
It is shown in \cite{ErnstEtAl2018} that if $\Lambda$ is finite and \emph{monotone} (meaning that $\vi \in \Lambda$ implies that any $\vj \in \mc F$ for which $\vj \le \vi$ holds componentwise also belongs to $\Lambda$), 
then $U_\Lambda$ is the identity on $\mc P_\Lambda$ and $\Delta_\vi$ vanishes on $\mc P_\Lambda$ for any $\vi \not\in \Lambda$.

The construction of $U_\Lambda f$ for $f\colon \Gamma \to \bbR$ consists of a linear combination of tensor product interpolation operators requiring the evaluation of $f$ at certain multivariate nodes.
It can be shown that for $\vi \in \mc F$ the detail operators have the representation
\[
	\Delta_\vi f 
	= \Big[ \bigotimes_{m\geq 1} (U_{i_m} - U_{i_m-1})  \Big]f
	= \sum_{\vi - \boldsymbol 1 \leq \vk \leq \vi} (-1)^{|\vi-\vk|_1} \Big[\bigotimes_{m\geq 1} U_{k_m}\Big]f,
\]
leading to an alternative representation of $U_\Lambda$ for monotone finite subsets $\Lambda \subset \mc F$ known as the \emph{combination technique}:
\begin{equation}\label{equ:combitec}
	U_\Lambda = \sum_{\vi \in \Lambda} c(\vi;\Lambda)\, U_\vi,
	\qquad
	c(\vi;\Lambda) := \sum_{\ve \in \{0,1\}^\bbN\colon \vi+\ve \in \Lambda} (-1)^{|\ve|_1}.
\end{equation}
We refer to the collection of nodes appearing in the tensor product interpolants $U_{\vi}$ as the \emph{sparse grid} $\Xi_\Lambda \subset \Gamma$ associated with $\Lambda$:
\begin{equation}\label{equ:sparse_grid}
	\Xi_\Lambda = \bigcup_{\vi \in \Lambda} \Xi^{(\vi)}.
\end{equation}
In the same way, when approximating the solution $u:\Gamma \to H_0^1(D)$ of \eqref{equ:PDE} by $u(\vxi) \approx (U_\Lambda u)(\vxi)$, each evaluation $u(\vxi_j)$ at a sparse grid point $\vxi_j \in \Xi_\Lambda$ represents the solution of the PDE for the coefficient $a = a(\vxi_j)$.

\begin{remark}\label{rem:on-sparse-grid-constr}
\rev{Let us provide some further comments.}
\begin{enumerate}
\item
The univariate interpolation operators $U_k$ in \eqref{eq:U_k-def}, on which the sparse \rev{grid} collocation construction is based, will have degree of exactness $k$, as the associated sets of interpolation nodes $\Xi^{(k)}$ have cardinality $k+1$.
Although we do not consider this here, allowing nodal sets to grow faster than this may bring some advantages. 
Such an example is the sequence of \emph{Clenshaw--Curtis nodes} (cf.\ \cite{NobileEtAl2008a}), for which $|\Xi^{(0)}| = 1$ and $|\Xi^{(k)}| = 1+2^{k}$.
\item
The Clenshaw--Curtis doubling scheme \rev{generates} \emph{nested} node sets $\Xi^{(k+1)} \subset \Xi^{(k)}$.
This has the advantage that higher order collocation approximations may re-use function evaluations of previously computed lower-order approximations.
Moreover, it was shown in \cite{BarthelmannEtAl2000} that sparse \rev{grid} collocation based on nested node sequences are interpolatory.
By contrast, the sequence of Gauss--Hermite nodes with $|\Xi^{(k)}|=k+1$ results in disjoint consecutive nodal sets.
The number of new nodes added by each consecutive set is referred to as the \emph{granularity} of the node sequence.

\item
Two heuristic approaches for constructing monotone multi-index sets $\Lambda$ for sparse polynomial collocation for lognormal random diffusion equations are presented in \cite{ErnstEtAl2018}. 
Further details are given in Section~4.
\end{enumerate}
\end{remark}

In \cite{ErnstEtAl2018}, a convergence theory for sparse polynomial collocation approximations $f \approx U_\Lambda f$ of functions in $f \in L^2_\mu(\Gamma, H_0^1(D))$ was given based on the expansion 
\[
   f(\vxi) = \sum_{\vk \in \mc F} f_\vk \, H_\vk(\vxi), 
   \qquad 
   f_\vk = \int_\Gamma f(\vxi) H_\vk(\vxi) \mu(\d \vxi),
\]
in tensorized Hermite polynomials $H_\vk(\vxi) = \prod_{m \in \mathbb N} H_{k_m}(\xi_{m})$, $\vk \in \mc F$, with $H_{k_m}$ denoting the univariate Hermite orthogonal polynomial of degree $k_m$, which are known to form an orthonormal basis of $L^2_\mu(\Gamma; H_0^1(D))$.

Under assumptions to be detailed below, it was shown \cite[Theorem 3.12]{ErnstEtAl2018} that there exists a nested sequence of monotone multi-index sets $\Lambda_N \subset \mc F$, where $N=|\Lambda_N|$, such that the sparse \rev{grid} collocation error of the approximation $U_{\Lambda_N}f$ satisfies
\begin{equation} \label{equ:coll-rate}
   	\left\| f - U_{\Lambda_N} f \right\|_{L^2_\mu} 
	\leq 
	C (1+N)^{-\left( \frac 1p - \frac 12\right )},
\end{equation}
for certain values of $p \in (0,2)$ with a constant $C$. 
The precise assumptions under which \eqref{equ:coll-rate} was shown to hold are as follows:
\begin{enumerate}[(1)]
\item
The condition $\mu(\Gamma)=1$ on the domain of $f$ (cf.\ \eqref{equ:Gamma}).
\item
An assumption of weighted $L^2_\mu$-summability on the derivatives of $f$: specifically, there exists $r \in \mathbb N_0$ \rev{such that $\partial^{\vk} f \in L^2_\mu(\mathbb R^{\mathbb N}; H_0^1(D))$ for all $\vk \in \mc F$ with $|\vk|_\infty \le r$ and a sequence of positive numbers $(\tau_m^{-1})_{m \in \mathbb N} \in \ell^p(\mathbb N)$, $p \in (0,2)$, such that} relation \eqref{equ:bound_partial_weighted} holds.
\item
An assumption on the univariate sequence of interpolation nodes: there exist constants $\theta \ge 0$ and $c \ge 1$ such that the univariate detail operators \eqref{equ:detail} satisfy
\begin{equation} \label{equ:Delta_bound}
 	 \max_{i \in \bbN_0} \|\Delta_i H_k\|_{L^2_\mu} \leq (1+ c k)^\theta,
	\qquad k \in\bbN_0.
\end{equation}
\end{enumerate}
In order \rev{for \eqref{equ:coll-rate} to hold true}, it is sufficient that \eqref{equ:bound_partial_weighted} be satisfied for $r  > 2(\theta+1) + \frac 2p$.
It was shown in \cite[Lemma 3.13]{ErnstEtAl2018} that \eqref{equ:Delta_bound} holds with $\theta = 1$ for the detail operators $\Delta_k = U_k - U_{k-1}$ associated with univariate Lagrange interpolation operators $U_k$ at Gauss-Hermite nodes, i.e., the zeros of the univariate Hermite polynomial of degree $k+1$.

\subsection{Gaussian Leja Nodes}
\label{sec:Gaussian_Leja}
\emph{Leja points} for interpolation on a bounded interval $I \subset \bbR$ are defined recursively by fixing an arbitrary initial point $\xi_0 \in I$ and setting
\begin{equation} \label{equ:leja-def}
	\xi_{k+1} := \argmax_{\xi \in I} \prod_{i=1}^k |\xi - \xi_i|,
	\qquad
	k \in \mathbb N_0.
\end{equation}
They are seen to be nested, possessing the lowest possible granularity and have been shown to have an asymptotically optimal distribution \cite[Chapter~5]{SaffTotik1997}.
The quantity maximized in the extremal problem \eqref{equ:leja-def} is not finite for unbounded sets $I$, which arise, e.g., when an interpolation problem is posed on the entire real line. 
Such is the case with parameter variables $\xi_m$ which follow a Gaussian distribution. 
By adding a weight function vanishing at infinity faster than polynomials grow, one can generalize the Leja construction to unbounded domains (cf.\ \cite{Lubinsky2007}). 
Different ways of incorporating weights in \eqref{equ:leja-def} have also been proposed in the bounded case, cf.\ e.g.\ 
\cite[p.\ 258]{SaffTotik1997},
\cite{BaglamaEtAl1998}, and
\cite{DeMarchi2004}.
In \cite{NarayanJakeman2014}, it was shown that for weighted Leja sequences generated on unbounded intervals  $I$ by solving the extremal problem
\begin{equation} \label{equ:weighted Leja}
   \xi_{k+1} = \argmax_{\xi \in I}  \sqrt{\rho(\xi)} \prod_{i=0}^{k} |\xi - \xi_i|,
\end{equation}
where $\rho$ is a probability density function on $I$, their asymptotic distribution coincides with the probability distribution associated with $\rho$.
This is shown in \cite{NarayanJakeman2014} for the generalized Hermite, generalized Laguerre and Jacobi weights, corresponding to a generalized Gaussian, Gamma and Beta distributions.
Subsequently, the result of \cite{TaylorTotik2010} on the subexponential growth of the Lebesgue constant of bounded unweighted Leja sequences was generalized to the unbounded weighted case in \cite{JantschEtAl2019}.

If we choose $\rho(\xi) = \exp(-\xi^2/2)$ and $I = \bbR$ in \eqref{equ:weighted Leja} and set $\xi_0 = 0$, then we shall refer to the resulting weighted Leja nodes also \emph{Gaussian Leja nodes} in view of their asymptotic distribution.
Unfortunately, the result in \cite{JantschEtAl2019} does not imply a bound like \eqref{equ:Delta_bound} for univariate interpolation using Gaussian Leja nodes.
However, we provide numerical evidence in Figure \ref{fig:Delta_Norms} suggesting that \eqref{equ:Delta_bound} is also satisfied for Gaussian Leja nodes with $\theta = 1$.
\begin{figure}
\begin{minipage}[c]{0.5\textwidth}
\centering
\includegraphics[width=\textwidth]{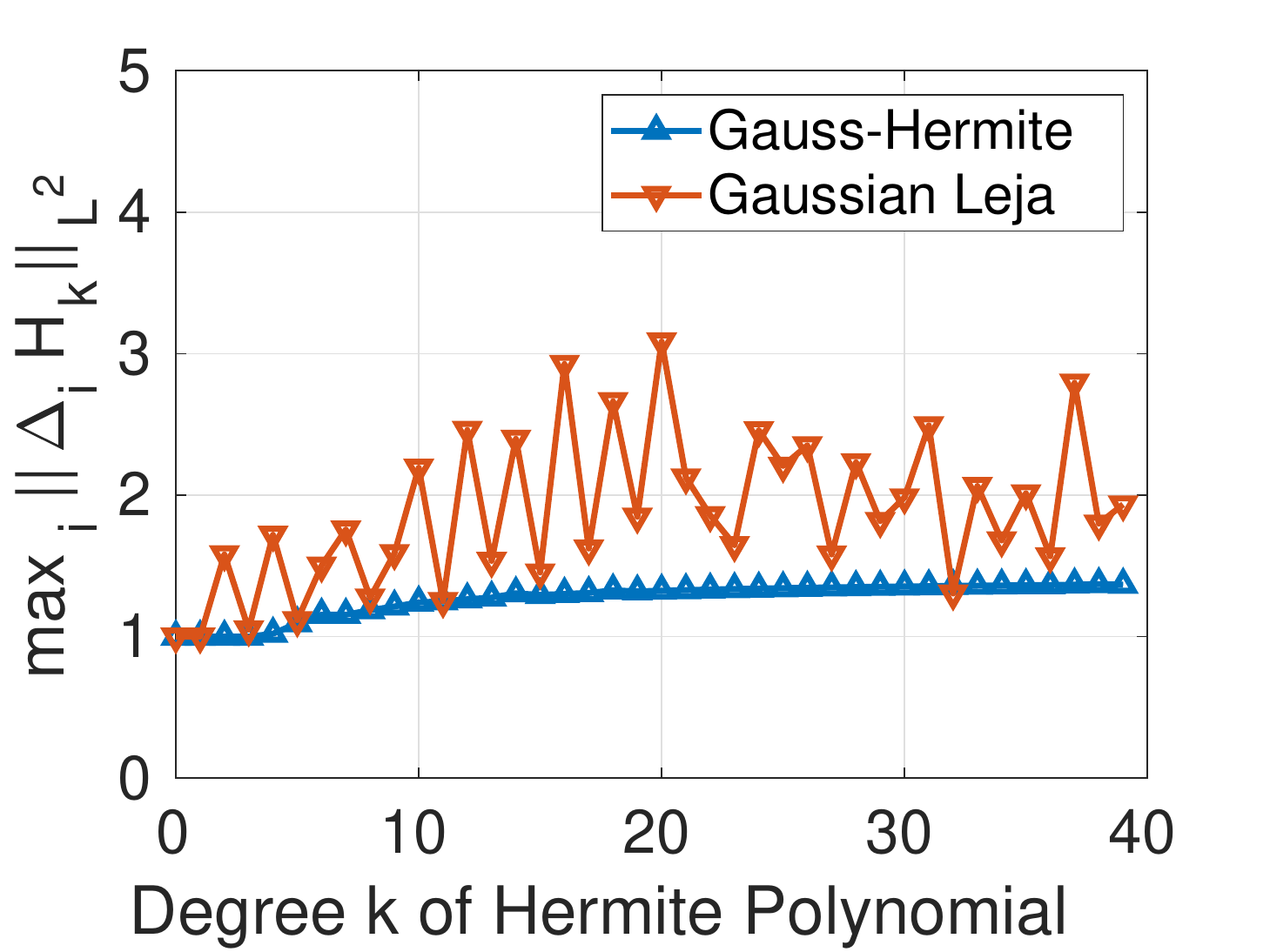}
\end{minipage}
\begin{minipage}[b]{0.49\textwidth}
\caption{Comparison of $\max_{i} \|\Delta_i H_k\|_{L^2_\mu}$, $k = 1,\ldots,39$, for Gauss--Hermite and Gaussian Leja nodes.}
\label{fig:Delta_Norms}
\vfill\phantom{a}
\end{minipage}
\end{figure}
In the next subsection we compare the performance of Gaussian Leja nodes for quadrature and interpolation purposes to that of Gauss--Hermite and Genz--Keister nodes \cite{GenzKeister1996}, which represent another common univariate node family for quadrature w.r.t.~a Gaussian weight.
Although a comparison of Gaussian Leja with Genz--Keister points is already available in \cite{NarayanJakeman2014} and a comparison between Gauss--Hermite and Genz--Keister points is reported in \cite{NobileEtAl2016,Chen2016}, the joint comparison of the three choices has not been reported in literature to the best of our knowledge.

\subsection{Performance Comparison of Common Univariate Nodes} 
\label{sec:Comp_Nodes}
In this section we investigate \rev{and compare the performance of} numerical quadrature and interpolation of uni- and \rev{multi}variate
functions \rev{($M=2,6,9$ variables)} using either Gauss--Hermite, Genz--Keister or Gaussian Leja nodes.
\rev{As a measure of performance we consider the achieved error in relation to the number of employed quadrature or interpolation nodes, respectively. 
Quadrature is carried out with respect} to a standard (multivariate) Gaussian measure $\mu$ and the interpolation error is measured in $L^2_\mu$.
The functions we consider in this section were previously proposed in \cite{Tamellini2012} for the purpose of comparing univariate quadrature with Gauss--Hermite and Genz--Keister points and are \rev{included} in the figures displaying the results.

Quadrature results are reported in Figure \rev{\ref{fig:quad_results}}. 
In the univariate case, Gauss--Hermite nodes \rev{perform best}, 
and Genz--Keister nodes also show good performance, which is not surprising given that they are constructed as nested extensions of the Gauss--Hermite points with maximal degree of exactness.
The Gaussian Leja nodes, by comparison, perform poorly.
This should not surprise, however, given that Gaussian Leja points are determined by minimizing
Lebesgue constants, i.e., they are conceived as interpolation points rather than quadrature points.

In the \rev{multi}variate case, however, the situation changes and Gauss--Hermite nodes are the worst performing\rev{.
This is} due to their non-nestedness, which tends to introduce unnecessary quadrature nodes into the quadrature scheme.
Note that in this case we are simply using the standard Smolyak sparse multi-index set in $M$ dimensions in Equation \eqref{equ:U_Lambda},
\[
	\Lambda_w = \biggl\{ \vi \in \bbN^M : \sum_{m=1}^M i_m \leq w \biggr\},  
	\quad \mbox{for some } w \in \bbN,
\]
i.e., we are not tailoring the sparse grid either to the function to be integrated nor to the univariate points.
\rev{The Gaussian Leja and Genz--Keister points show a faster decay of the quadrature error, due to their nestedness.
  This is remarkable in particular for Gaussian--Leja, given that they were proposed in literature as univariate interpolation points, as already discussed.
  Overall, the Genz--Keister points show the best performance as expected, but it is important to recall that
  only a limited number of Genz--Keister nodes is available, i.e., no nested Genz--Keister quadrature formula with real
  quadrature weights and more than 35 nodes is known in literature, \cite{GenzKeister1996,Tamellini2012,heiss.winschel:kpnquad}. 
  In particular, the plots report the largest standard sparse grids that can be built with these rules before running out of tabulated Genz--Keister points.}

\rev{We remark that introducing a Genz--Keister quadrature formula with more than 35 nodes
  is not a simple matter of investing more computational effort and tabulating more points, but it would
  entail some ``trial and error'' phase to look for a suitable sequence of so-called ``generators'', see e.g. \cite{Tamellini2012} for more details.
  This activity exceeds the scope of this paper.
  Moreover, Genz--Keister nodes are significantly less granular, which could be a disadvantage in certain situations:
  indeed, the cardinalities of the univariate Genz--Keister node sets are $|\Xi^{(k)}| = 1, 3, 9, 19, 35$ for $k=0,\dots,4$
  (and a sequence of Genz--Keister sets exceeding $35$ nodes might be even less granular, e.g., jumping from 1 to 5 or 7).}
\begin{figure}[t]
  \centering
  \includegraphics[width=\linewidth]{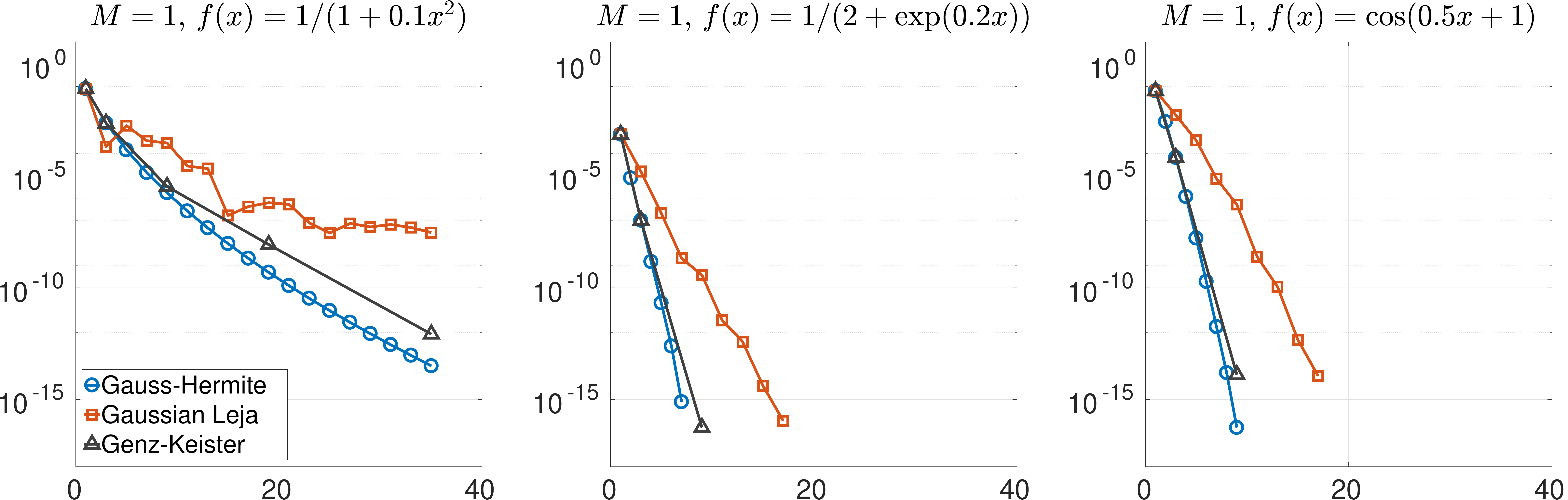}
  \includegraphics[width=\linewidth]{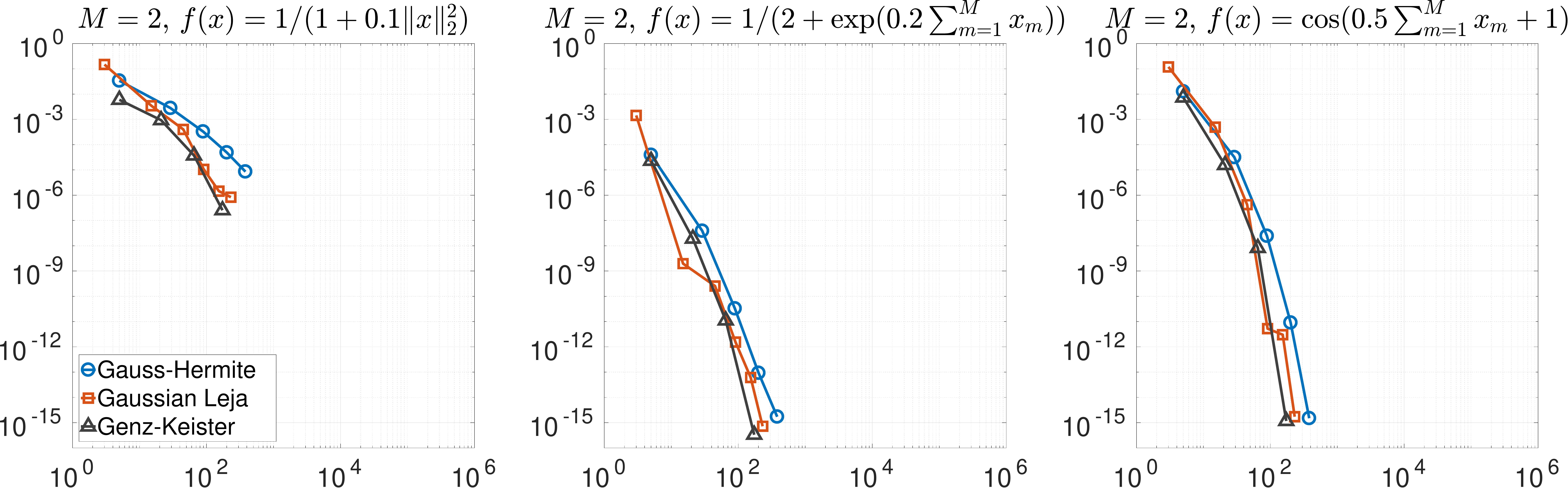}
  \includegraphics[width=\linewidth]{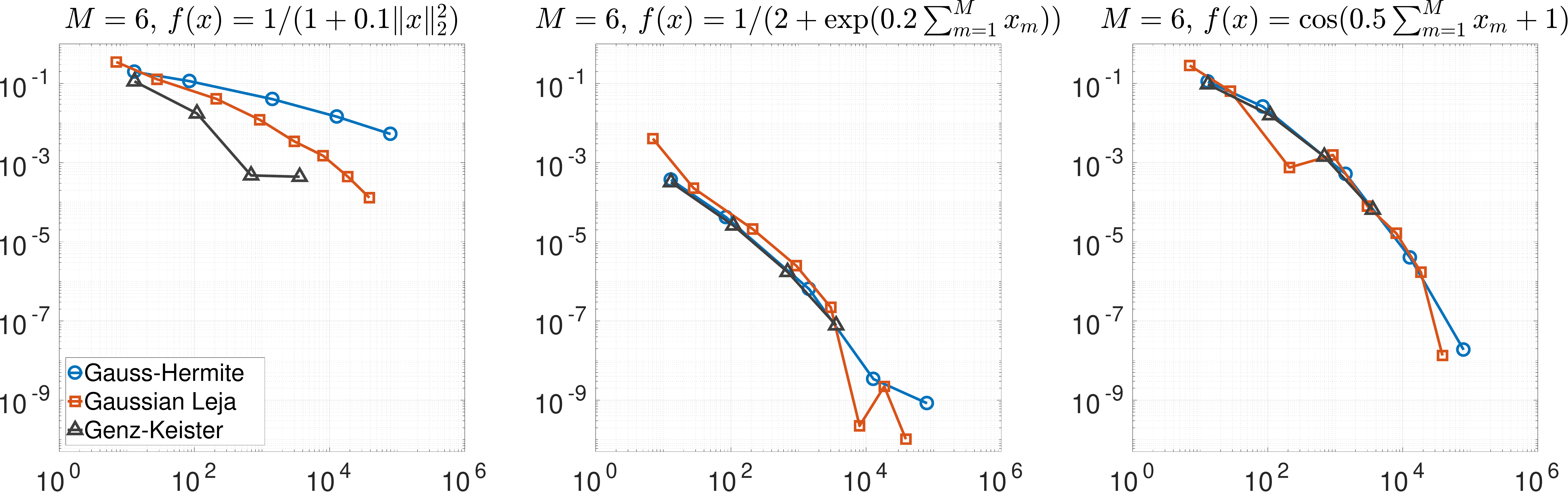}
  \includegraphics[width=\linewidth]{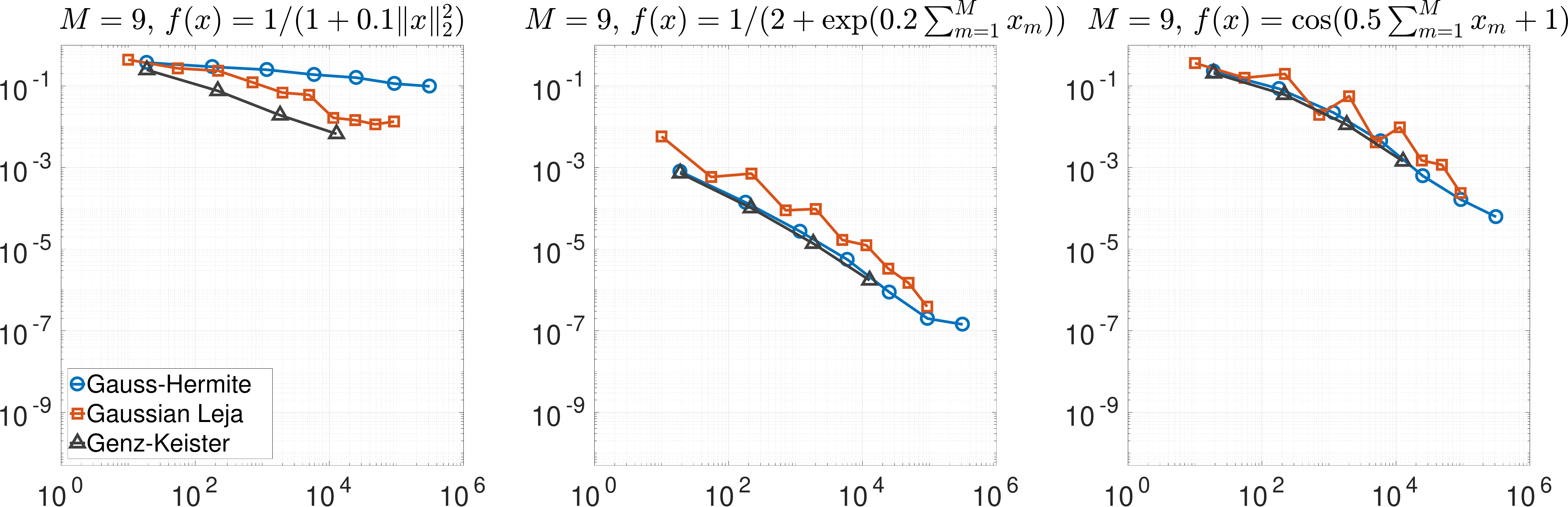}
  \caption{Results for univariate \rev{and multivariate} quadrature test.}
  \label{fig:quad_results}
\end{figure}
Next, \rev{we turn to comparing the performance of the different node families for interpolation.
Here,} Gaussian Leja nodes are expected to be best (or close-to-best) performing, given their specific design. 
Measuring interpolation error on unbounded domains with a Gaussian measure (or any non-uniform measure for that matter)
is a delicate task, as one would need to choose a proper weight to ensure boundedness
of the pointwise error, see e.g. \cite{Harbrecht2016,NobileEtAl2016}.
In this contribution, we actually discuss the $L^2_\mu$ approximation error of the interpolant, which we compute as follows: we sample $K$ independent batches of $M$-variate Gaussian random variables, with $P$ points each, $\mathcal{B}_k = \{\vxi_{i}\}_{i=1}^P, \xi_{i,m} \sim \mathsf{N}(0,1), m=1,\ldots,M, k=1,\ldots,K$; we construct a sequence of increasingly accurate sparse grids $U_{\Lambda_w}[f]$ and evaluate them on each random batch; we then approximate the $L^2_\mu$ error for each sparse grid on each batch by Monte Carlo,
\[
    \text{Err}_{k}(U_{\Lambda_w}[f]) 
    = 
    \frac{1}{P} \sum_{i=1}^P ( f(\vxi_i) - U_{\Lambda_w}[f](\vxi_i))^2
\]
and then we show the convergence of the median value of the $L^2_\mu$ error for each sparse grid over the $K$ repetitions.%
\footnote{Exchanging the median value with the mean value does not significantly change the plots, which means that the errors are distributed symmetrically around the median. 
For brevity, we do not report these plots here.
We have also checked that the distribution of the errors is not too spread, by adding boxplots to the convergence lines. 
Again, we do not show these plots for brevity. 
Finally, observe that we could have also employed a sparse grid to compute the $L^2_\mu$ error, but we chose Monte Carlo quadrature to minimize the chance that the result depends on the specific grid employed.}
\rev{The results are reported in Figure \ref{fig:interp_results}. The plots indicate that the convergence of interpolation
  degrades significantly as the number of dimension $M$ increases (due to the simple choice of index-set $\Lambda_w$),
  and in particular the convergence of grids based on Gauss--Hermite points is always the worst among those tested
  (due again to their non-nestedness),
  so that using nested points such as Gaussian Leja or Genz--Keister becomes mandatory. The performance of Genz--Keister points is
  surprisingly good, even better than Gaussian Leja at times, despite the fact that they are designed for quadrature rather than interpolation.
  However, the rapid growth 
and the limited availability of Genz--Keister points still are substantial drawbacks.
  To this end, we remark that also in these plots we are showing the largest grid that we could compute before running out of Genz--Keister points.} 

\begin{figure}[t]
  \centering
  \includegraphics[width=\linewidth]{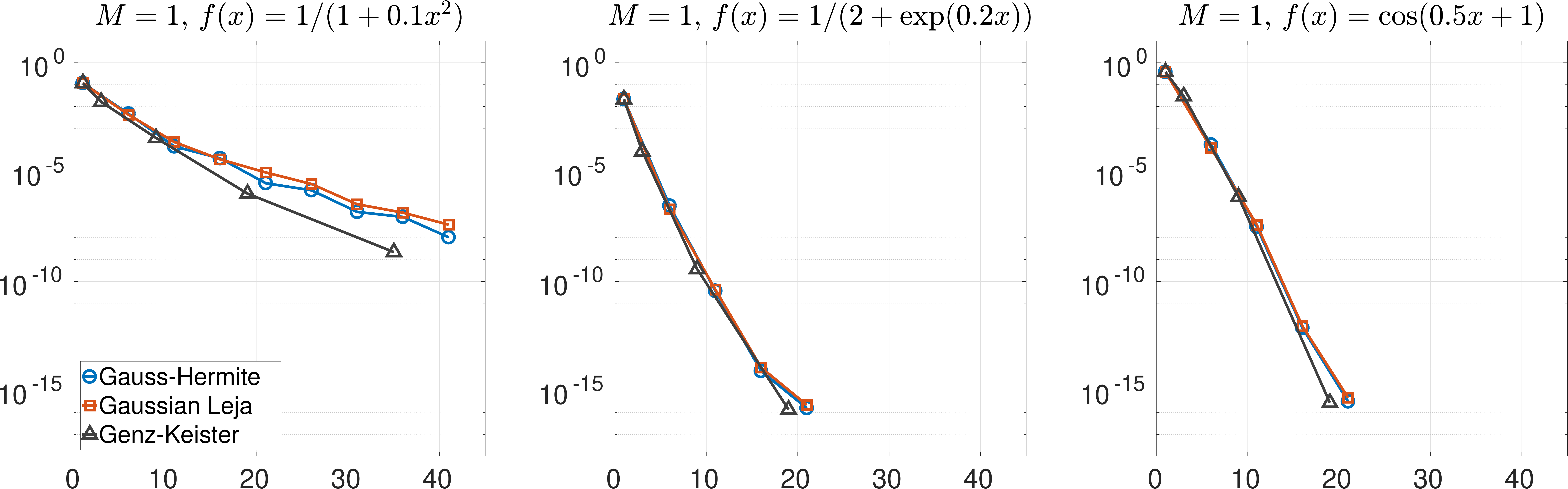}
  \includegraphics[width=\linewidth]{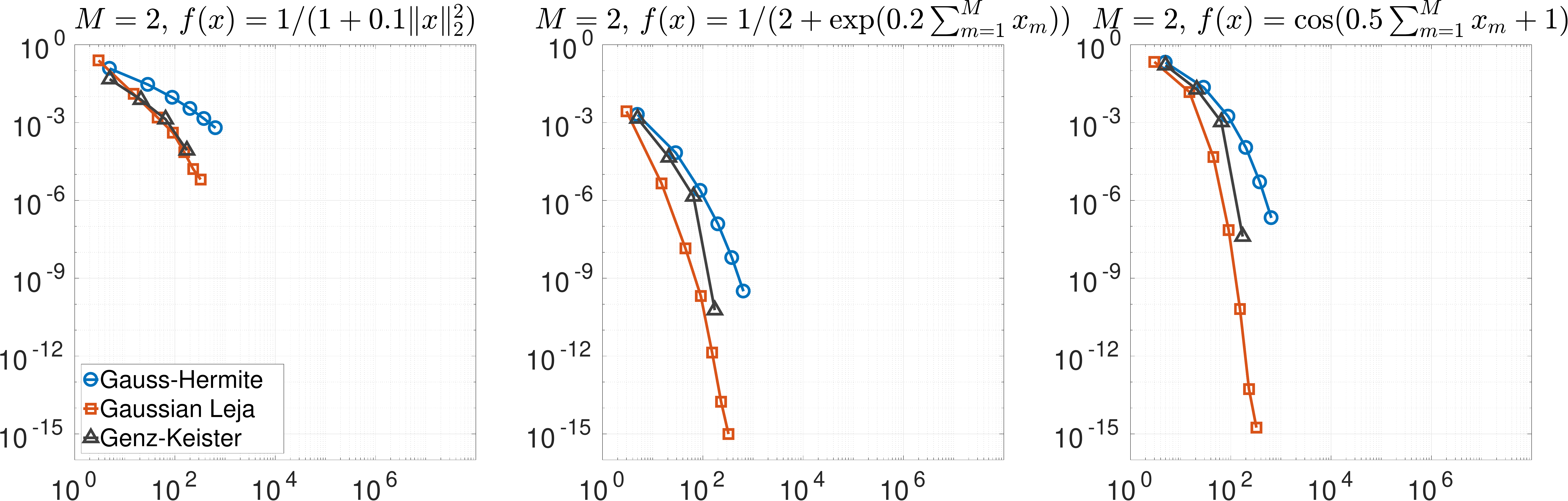}
  \includegraphics[width=\linewidth]{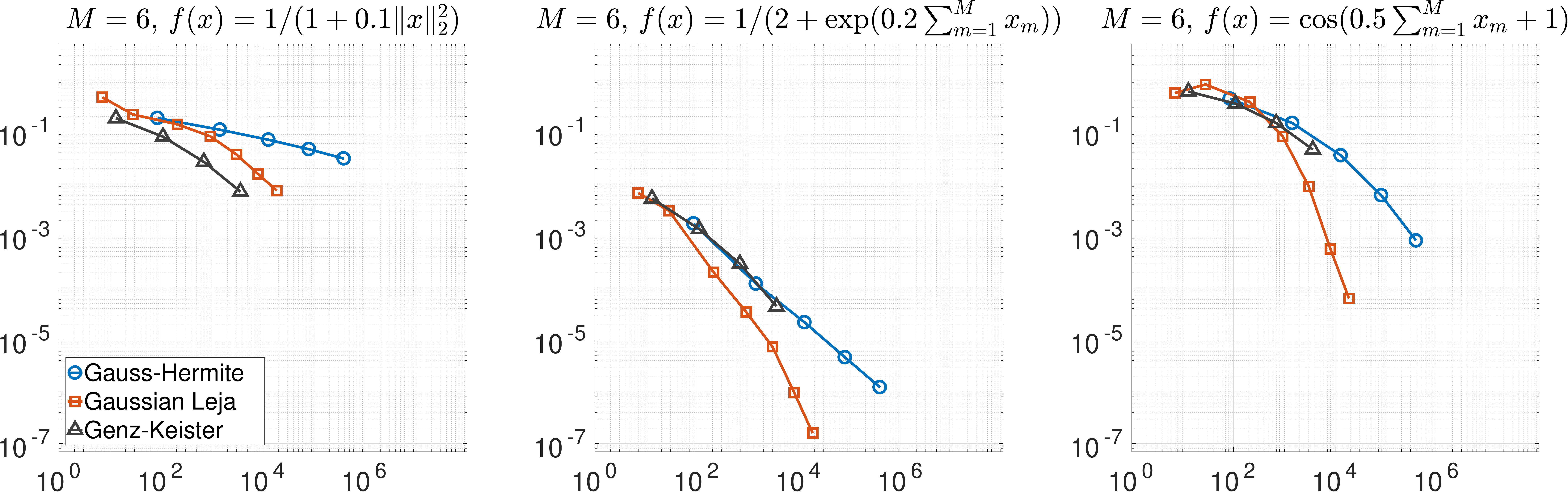}
  \includegraphics[width=\linewidth]{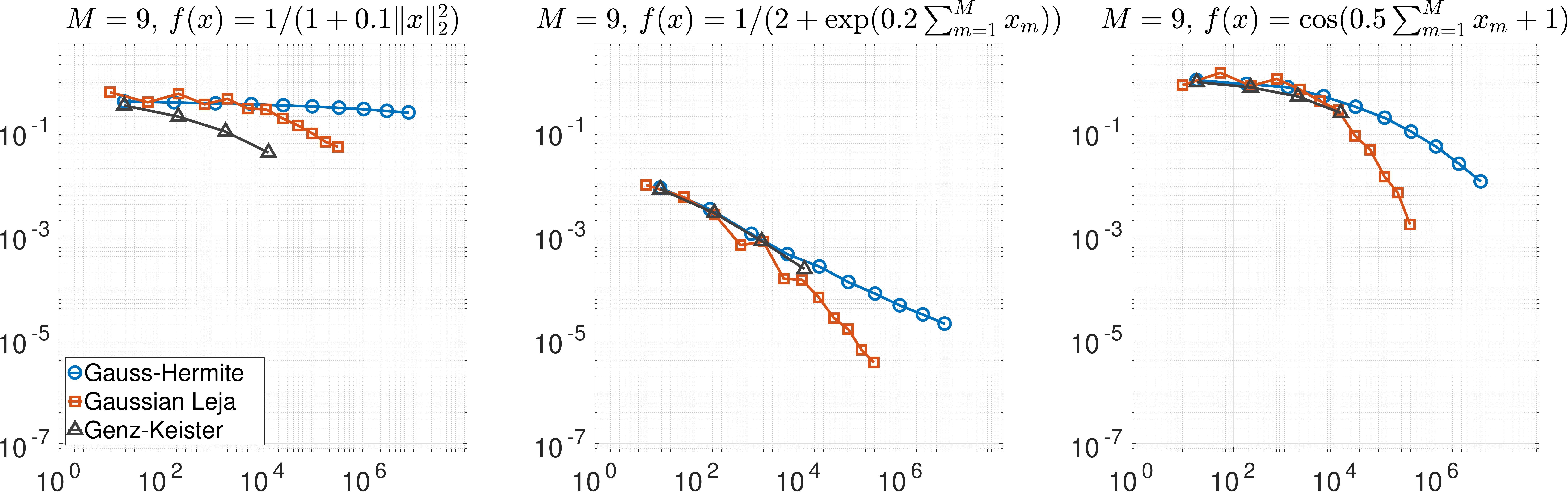}
  \caption{$L^2_\mu$ error for \rev{univariate and }multivariate interpolation.
    \rev{The results for the univariate test were produced with $K=30$ repetitions, each with $P=100$ samples}.
    The results \rev{for the multivariate test} were produced with $K=50$ repetitions, each with $P=500$ samples.}
  \label{fig:interp_results}
\end{figure}

\section{Numerical Results} \label{sec:numerics}
We now perform numerical tests solving the elliptic PDE introduced in Section \ref{sec:pde},
with the aim of extending the numerical evidence obtained in \cite{ErnstEtAl2018}.
In that paper, we assessed:
\begin{itemize}
\item 
the sharpness of the predicted rate for the a-priori sparse grid construction (both with respect to the number of \rev{multi-}indices in the set and the number of points in the sparse grids);
\item 
the comparison in performance of the a-priori and the classical dimension-adaptive a-posteriori sparse grid constructions;
\end{itemize}
limiting ourselves to Gauss--Hermite collocation points, which are covered by our theory. 
The findings indicated that our predicted rates are 
\rev{somewhat conservative. Specifically, }the rates of convergence measured in numerical experiments were larger than the theoretical ones by a factor between 0.5 and 1, cf. \cite[Table 1]{ErnstEtAl2018}.
This is due to some technical estimates applied in the proof of the convergence results which \rev{we were so far not able to improve}.
Concerning the second \rev{point, we observed in \cite{ErnstEtAl2018} that} the a-priori construction is actually competitive with the a-posteriori adaptive variant, especially if one considers the extra PDE solves needed to explore the set of multi-indices. 

We remark in particular that we observed convergence of the sparse grid approximations even in cases 
in which the theory predicted no convergence (albeit with a rather poor convergence rate, comparable to that attainable with Monte Carlo or Quasi Monte Carlo methods---see also \cite{NobileEtAl2016,tesei:MCCV} for possible remedies).

\rev{In this contribution, our goal is the numerical investigation of additional questions that so far remain unanswered by existing theory, among these:}
\begin{enumerate}
\item 
whether using Gaussian Leja or Genz--Keister nodes yields improvement over Gauss--Hermite nodes in our framework, see Section \ref{subs:GH-GK-LJ};
\item 
whether changing the random field representation from Karhunen-Lo\`eve (KL) to Lévy-Ciesielski (LC) expansion for the case $q=1$ (pure Brownian bridge) improves the efficiency of the numerical computations, see Section \ref{subs:KLEvsLCE}.
As explained above, this is motivated by the fact that LC expansion of the random field
allowed \cite{BachmayrEtAl2015} to prove convergence of the best-N-term approximation of the lognormal problem over Hermite polynomials.
\end{enumerate}

The tests were performed using the Sparse Grids Matlab Kit\footnote{v.18-10 ``Esperanza'',
which can be downloaded under the BSD2 license at \url{https://sites.google.com/view/sparse-grids-kit}.}.
We briefly recall the basic approaches of the two heuristics employed for constructing the multi-index sets $\Lambda$.
We refer to \cite{ErnstEtAl2018} for the full details of the two algorithms.
The first is the classical dimension-adaptive algorithm introduced by Gerstner and Griebel in \cite{GerstnerGriebel2003} with some suitable modifications to make it work with non-nested quadrature rules and for quadrature/interpolation on unbounded domains. 
It is driven by a posteriori error indicators computed along the outer margin of the current multi-index set.
The mechanism by which new random variables are activated in the multi-index set uses a ``buffer'' of fixed size containing variables whose error indicators have been computed but not yet selected. 
The second approach is an a-priori tailored choice of multi-index set $\Lambda$, which can be derived from the study of the decay of the spectral coefficients of the solution.

We thus consider the problem in \rev{Equation}~\eqref{equ:PDE} with $f=1$.
We set the pointwise standard deviation of $\log a$ \rev{to be} $\sigma=3$; note that this constant
does not appear explicitly in the expression for $\log a$ in Section \ref{sec:pde}, i.e., it has been absorbed in $\phi_m$.
Figure \ref{fig:a_realiz} shows 30 realizations of the random field $a(\omega)$ for different values of $q$, obtained by truncating the Karhunen-Lo\`eve expansion  of $a(\omega)$ at $M=1000$ random variables.
Specifically, we consider a smoothed Brownian bridge as in Example \ref{exam:smoothBB}, with
$q=3, \, 1.5, \, 1$, cf. \rev{Equation} \eqref{equ:BB_KLE_smooth}; for these values of $q$ a truncation at 1000 random variables covers $100\%$, $99.99996\%$ and $99.93\%$ of the total variance of $\log a$, respectively.
The plot shows how the realizations grow increasingly rough as $q$ decreases.
Upon plotting the corresponding PDE solutions (not displayed for brevity) one would \rev{observe that, by contrast,} solutions are much less rough, even in the case $q=1$.

\begin{figure}[t]
  \centering
  \includegraphics[width=\linewidth]{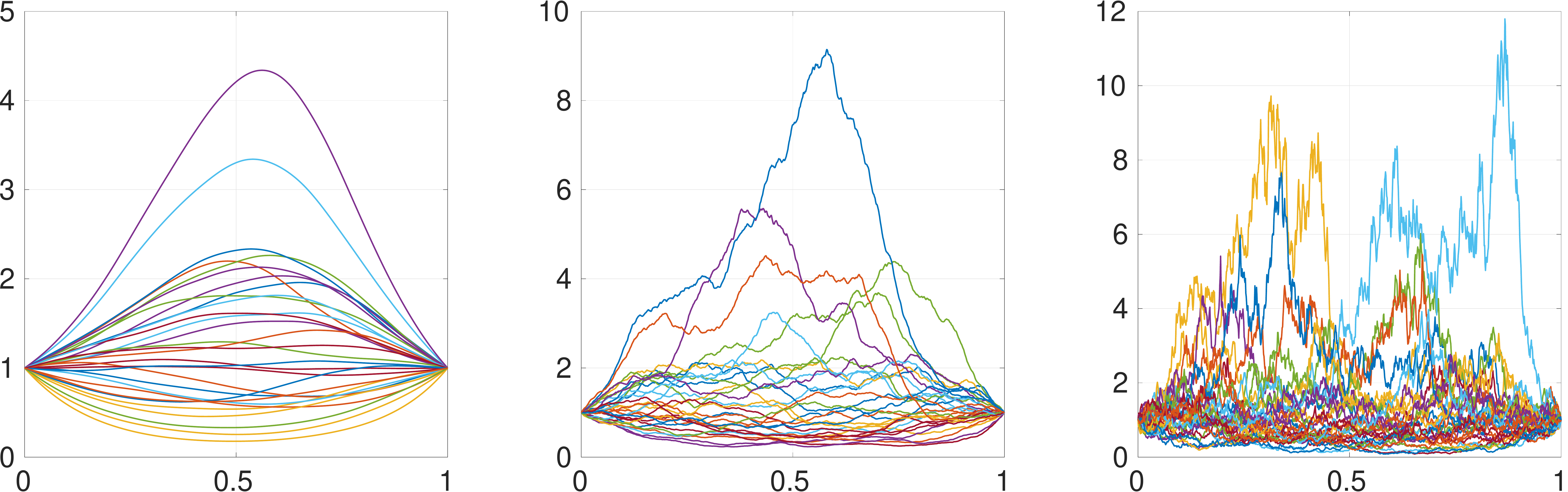}
  \caption{30 realizations of the random field for different values of $q$. Left: $q=3$; center: $q=1.5$; right: $q=1$. Note the different scaling of the vertical axis.}
  \label{fig:a_realiz}
\end{figure}

\subsection{Gauss--Hermite vs. Gaussian Leja vs. Genz--Keister nodes} \label{subs:GH-GK-LJ}

\begin{figure}[t]
  \centering
  \includegraphics[width=\linewidth]{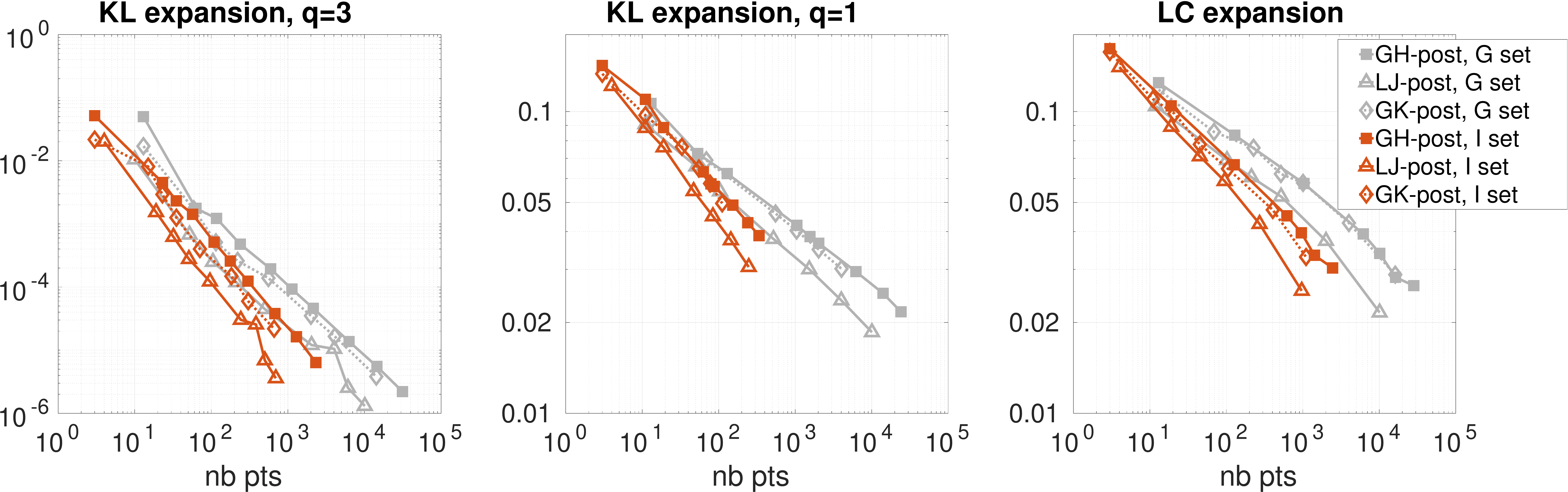}
  \includegraphics[width=\linewidth]{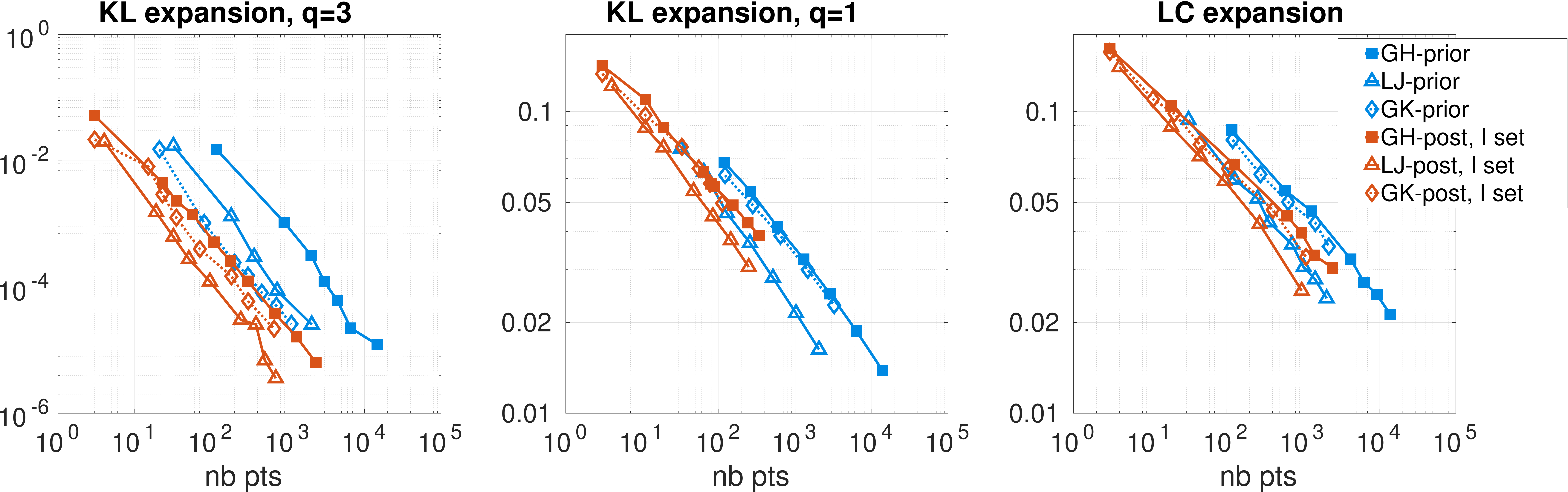}
  \includegraphics[width=\linewidth]{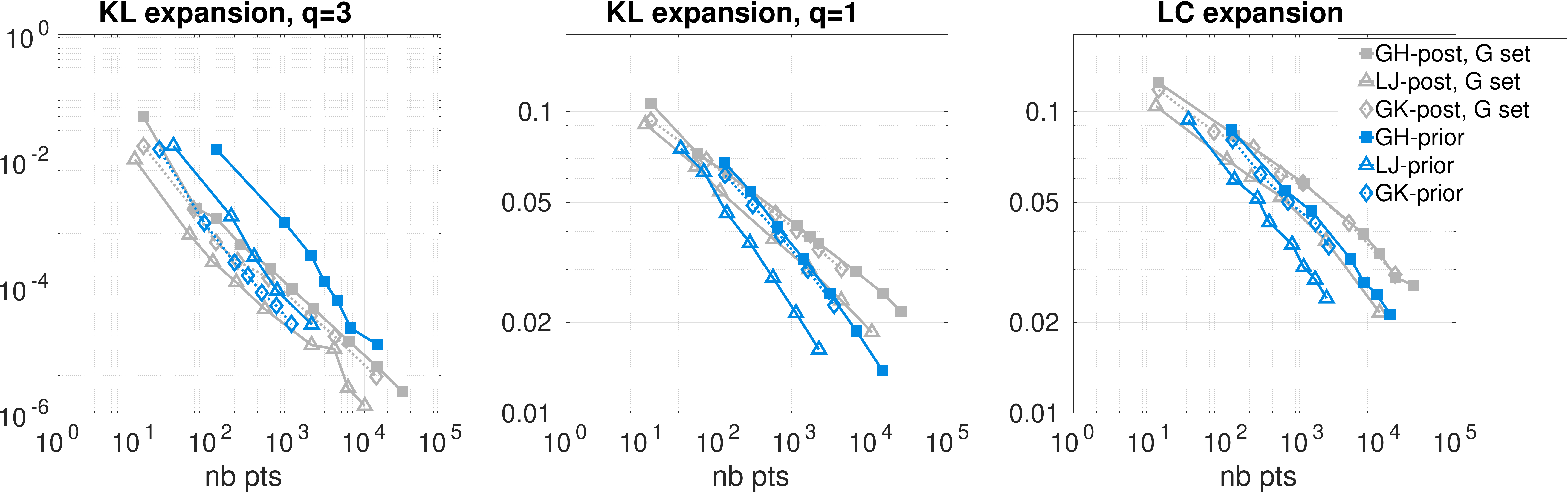}
  \caption{Comparison of performance for Gaussian Leja, Genz--Keister, and Gauss--Hermite points for different test cases and different
    sets of multi-indices. The plots report error versus number of points. \rev{To make the visual comparison easier
      we split the presentation into three parts.
The top row shows the two different sets produced by the a-posteriori algorithm
      (a-posteriori-I-set-incremental and a-posteriori-G-set-incremental).
The middle row compares a-priori and a-posteriori algorithms in terms of the optimal sets produced
      (a-priori-incremental and a-posteriori-I-set-incremental).
      The bottom row compares a-priori and a-posteriori algorithms in terms of bare computational cost
      (a-priori-incremental and a-posteriori-G-set-incremental).}}
  \label{fig:GHvsLJ}
\end{figure}
\begin{figure}[t]
  \includegraphics[width=\linewidth]{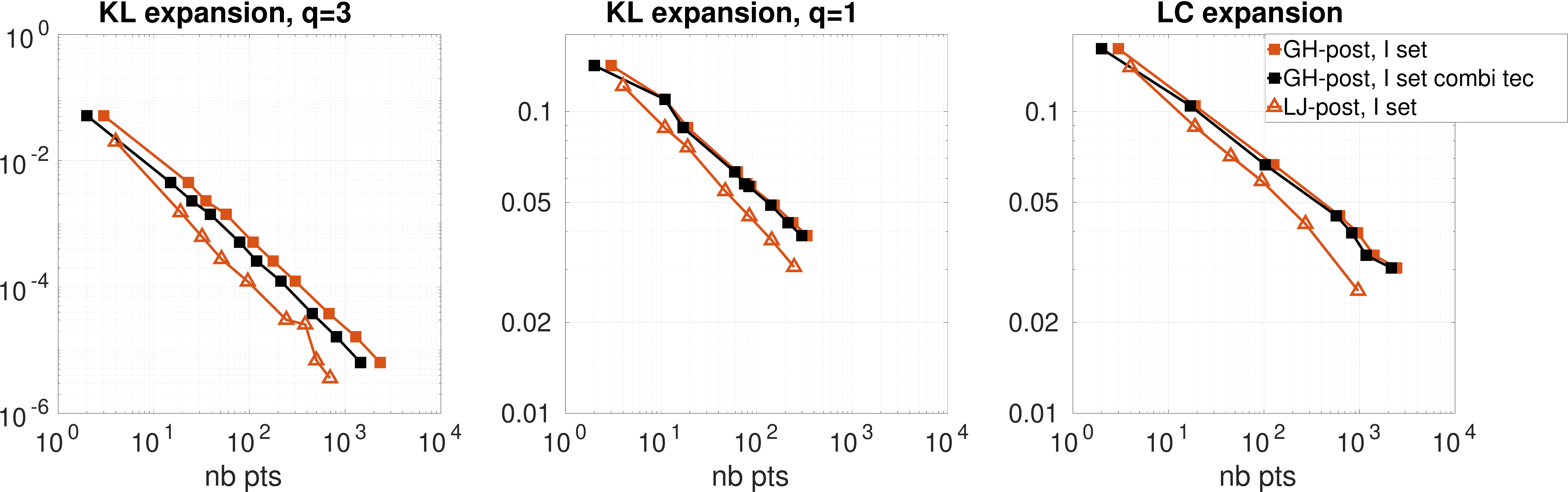}
  \includegraphics[width=\linewidth]{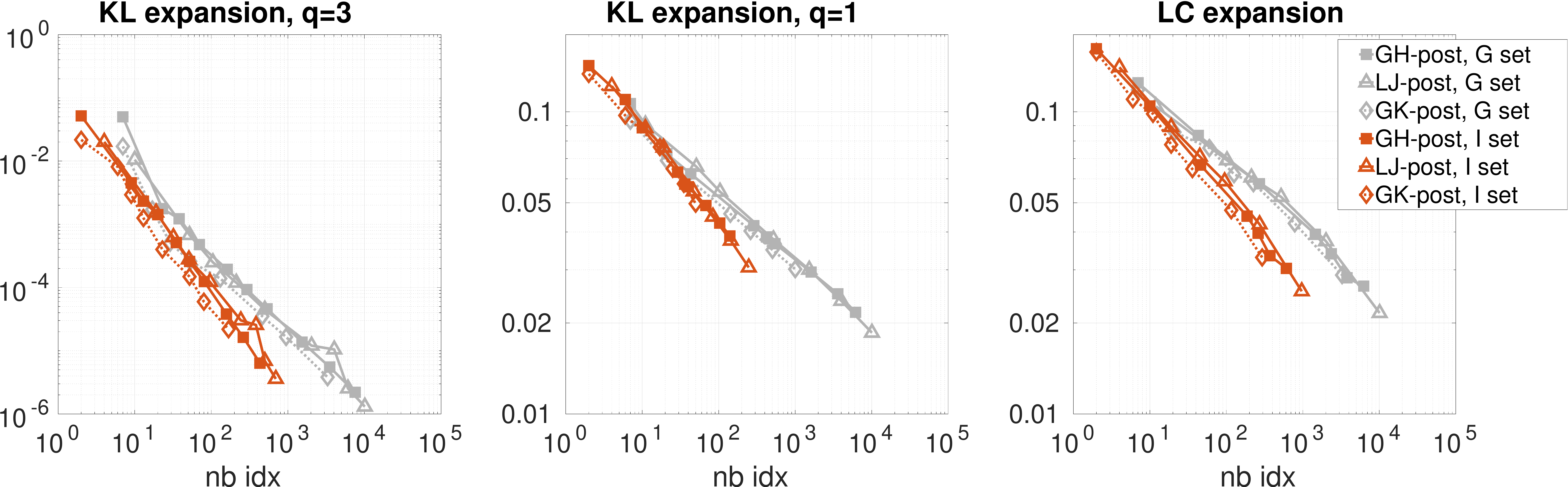}
  \includegraphics[width=\linewidth]{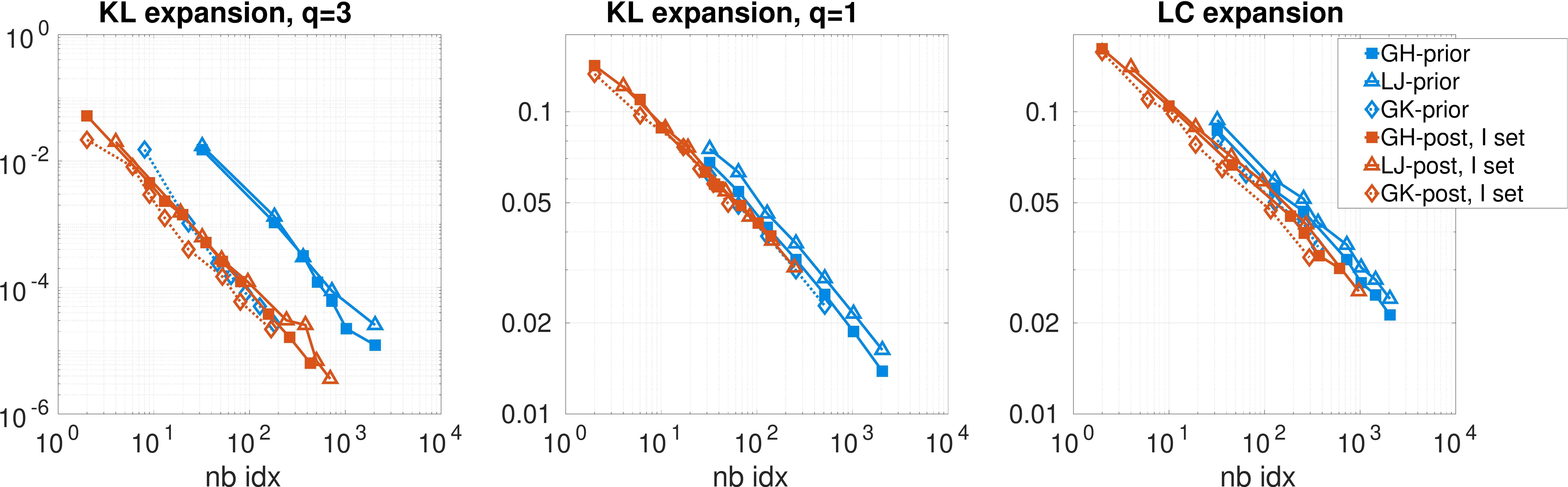}
  \caption{Top row: further analysis of influence of counting strategies in assessing the performance of Gaussian Leja, Genz--Keister, and Gauss--Hermite points.
    \rev{Middle and bottom rows:} plot of error versus number of indices in the sparse grid set for different test cases.
    \rev{The plots in these two rows are grouped in the same way as in Figure \ref{fig:GHvsLJ}.}}
  \label{fig:GHvsLJ_more}
\end{figure}

We begin the analysis \rev{with} the comparison of the performance of Gauss--Hermite, Gaussian Leja, and Genz--Keister points. 
To this end, we consider random fields of \rev{varying} smoothness, we choose an expansion (KL/LC) for each random field considered,
and we compute the sparse grid approximation of $u$ with 
the a-priori and a-posteriori dimension-adaptive sparse grid algorithm, with Gauss--Hermite, Gaussian Leja and Genz--Keister
points (i.e., 6 runs per choice of random field and associated expansion). 
Specifically, we consider three different random field \rev{expansions}, i.e., a KL expansion of the smoothed Brownian bridge with $q=3$, and a standard Brownian bridge ($q=1$) expanded with either KL or LC expansion, cf.\ again Examples \ref{exam:BB} and \ref{exam:smoothBB}. 
We compute the error in the full $L^2_\mu(\Gamma; H^1_0(D))$ norm again with a Monte Carlo sampling over 1000 samples of the random field, which has been verified to be sufficiently accurate for our purposes. 
These samples are generated considering a ``reference truncation level'' of the random field with 1000 random variables, which substantially exceeds the number of random variables active during the execution of the algorithms (which never involve more than a few hundred random variables).
In the first set of results, we report the convergence of the error with respect to the number of points in the grid. 
The manner of counting of the points is a subtle issue and can be done in various ways.
Here we consider the following different counting strategies:
\begin{description}
\item[\textbf{``incremental'':}] the number of points in the sparse grid $\Xi_\Lambda$ as defined in \eqref{equ:sparse_grid}, i.e., the points required to compute the application of $U_\Lambda$ as given in \eqref{equ:U_Lambda},
\item[\textbf{``combitec'':}] 
the number of points necessary for the combination technique representation of $U_\Lambda$ in \eqref{equ:combitec};
since $c(\vi;\Lambda)$ may be zero for some $\vi \in \Lambda$, we can omit the corresponding $U_\vi$ in \eqref{equ:combitec} and consider the possibly smaller combitec sparse grid $ \Xi^\text{ct}_\Lambda := \bigcup_{\vi \in \Lambda\colon c(\vi;\Lambda) \neq 0} \Xi^{(\vi)}$.
\end{description}

These strategies exhaust the counting strategies for the a-priori construction; note that these two counting schemes yield different values for non-nested points
(such as Gauss--Hermite), while they are identical for nested points (such as Gaussian Leja and Genz--Keister). 
For the a-posteriori construction, one should also further decide whether to apply these counting strategies including or excluding the indices in the margin of the current set (``I-set'' and ``G-set'' in the legend, respectively). \rev{Note that the ``I-set'' choice is more representative of the ``optimal index-set'' computed
by the algorithm, while the ``G-set'' is more representative of the actual computational cost incurred when running the algorithm.}

Results are reported in Figures \ref{fig:GHvsLJ} and \ref{fig:GHvsLJ_more}.
Throughout this section, we use the following abbreviations in the legend of \rev{the} convergence plots:
GH for Gauss--Hermite, LJ for Gaussian Leja, GK for Genz--Keister.
Figure \ref{fig:GHvsLJ} compares the performance of the three choices of points for the three choices of random
field \rev{expansions} and the \rev{two} sparse grid constructions mentioned earlier \rev{(a-posteriori/a-priori)},
in terms of $L^2_\mu$-error vs. number of collocation points.
Different colors identify different combination of grid constructions and counting
(\rev{light} blue for a-priori-incremental; red for a-posteriori-I-set-incremental; gray for a-posteriori-G-set-incremental).
\rev{The results for Gauss--Hermite points are indicated by \rev{solid} lines with \rev{square} filled markers,
those for Gaussian Leja points by \rev{solid} lines with empty \rev{triangle} markers,
and those for Genz--Keister by dashed lines with \rev{empty diamond markers}.}

The first and foremost observation to be made is that 
the Gaussian Leja performance is consistently better than Genz--Keister and Gauss--Hermite across algorithms (a-priori/a-posteriori) and test cases,
while Gauss--Hermite and Genz--Keister performance is essentially identical, in agreement with what reported e.g. in \cite{NobileEtAl2016,Chen2016}.
Only the Genz--Keister performance for the a-priori construction in the case $q=3$ is surprisingly good; we do not have an explanation for this,
and leave it to future research. Secondly, we observe that the a-priori algorithm performs
worse than the a-posteriori for $q=3$ (both considering the ``G-set'' and the ``I-set'' \rev{- left panel in the middle and bottom rows}),
while \rev{for the case $q=1$ it performs worse than the a-posteriori ``I-set'' but better than the a-posteriori
  ``G-set'' (regardless of type of expansion - mid and right panels in the central and bottom rows). This means that while there
  are better choices for the index set than a-priori one (e.g., the a-posteriori ``I-set''), these might be hard to derive,
  so that in practice it might be convenient to use the a-priori algorithm.}
This is in agreement with the findings reported in \cite{ErnstEtAl2018} and not surprising, given that in the case $q=1$ features a larger number
of random variables and therefore is harder to be handled by the a-posteriori algorithm. 

In Figure \ref{fig:GHvsLJ_more} we \rev{analyze in more detail} the relatively poor performance of Gauss--Hermite points.
In the top row we want to investigate whether the ``incremental''/``combitec'' counting (which we recall produces different
results only for Gauss--Hermite points) explains at least partially the gap between the Gauss--Hermite and the Gaussian Leja results in Figure \ref{fig:GHvsLJ}.
To this end, we focus on the a-posteriori ``I-set''.
For such grid and counting, we report the convergence curves from Figure \ref{fig:GHvsLJ} for both the Gauss--Hermite and the Gaussian Leja collocation points
and add in black with filled markers the ``combitec'' counting, which is more
favorable to Gauss--Hermite points. 
The plots show, however, that the counting method accounts for only a small fraction of the gap.

In the \rev{middle and} bottom \rev{rows} instead we investigate whether the set of multi-indices chosen by the algorithm also has an influence---in other words, could it be that because of the family of points, the algorithms are ``tricked'' into exploring less effective index sets?
To this end, we redo Figure \ref{fig:GHvsLJ} by showing the convergence with respect to the number of multi-indices in the set $\Lambda$,
instead of with respect to the number of points. The plots show that in this setting,
there is essentially no difference in performance between Gauss--Hermite, Gaussian Leja and Genz--Keister
points (again, excluding the case of Genz--Keister points for a-priori construction in the case $q=3)$,
which means that the sets obtained by the a-priori/a-posteriori algorithm, while different, are ``equally good''
in approximating the solution.\footnote{Incidentally, note that the a-priori algorithm
  doesn't take into account the kind of univariate nodes that will be used to build the sparse grids. Also note that
  of course the convergence of Gaussian Leja with respect to either number of points or number of multi-indices
  is identical, given that each multi-index adds one point.}
Thus, the consistent difference between Gaussian Leja, Genz--Keister and Gauss--Hermite nodes is really due to the nestedness \rev{of the  former two choices. 
Between the two choices of nested points, the Gaussian Leja points are more granular and easier to compute up to an arbitrary number: in conclusion, they appear to be a \rev{more suitable choice of collocation points} for the lognormal problem
\rev{in terms of accuracy versus number of points}.}


\subsection{KL vs. LC Expansion} \label{subs:KLEvsLCE}

The second set of tests aims at assessing whether expanding the random field over the wavelet basis (LC expansion)
brings any practical advantage in convergence of the sparse grid algorithm over using
the standard KL expansion.
Since from the previous discussion we know that Gaussian Leja nodes are more effective than Gauss--Hermite and Genz--Keister points,
we only consider Gaussian Leja points in this section.

\begin{figure}[t]
  \centering
  \includegraphics[width=\linewidth]{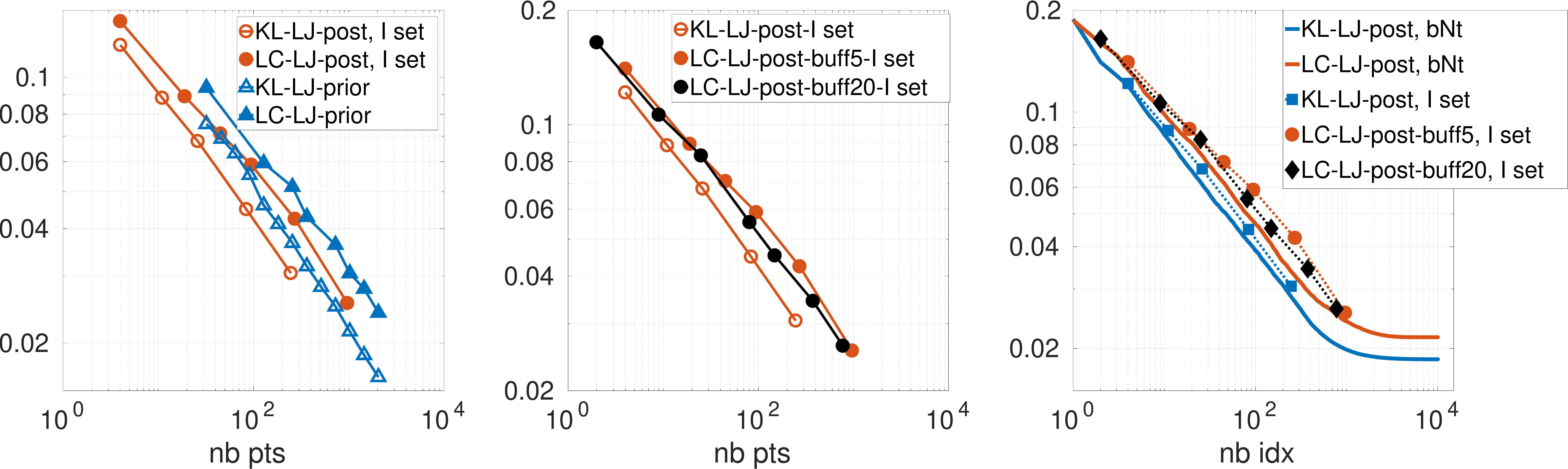}
  \caption{Comparison of performance for LC and KL expansions.}
  \label{fig:LCvsKL}
  \includegraphics[width=\linewidth]{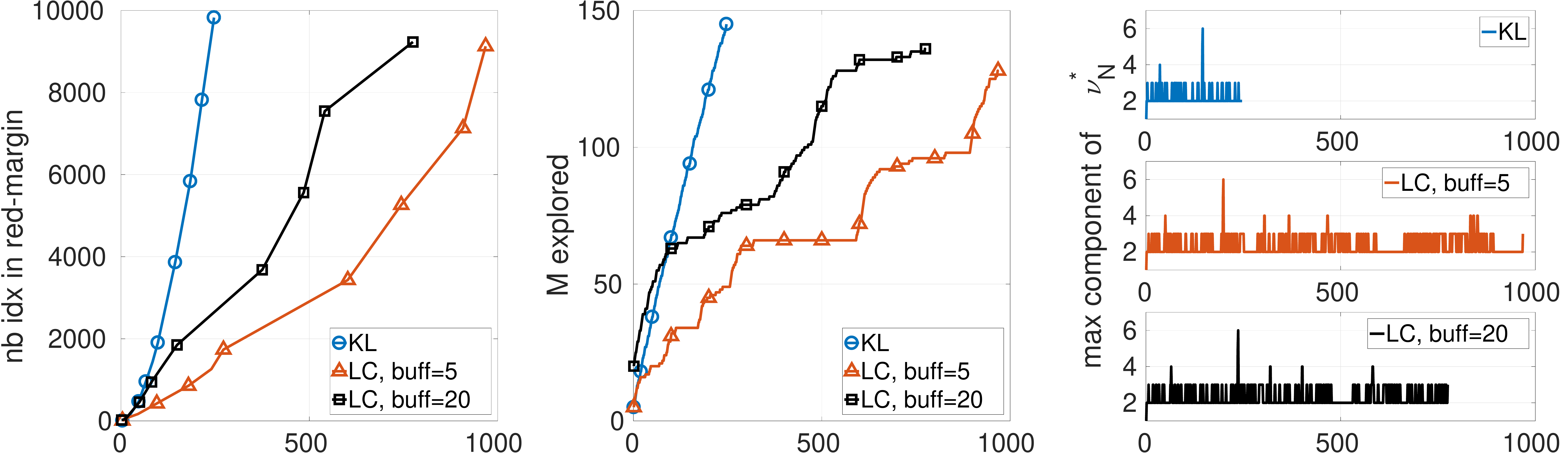}\\
  \caption{Evolution of the multi-index set $\Lambda$ for LC and KL expansions along
    iterations of the dimension-adaptive algorithm.}
  \label{fig:multiindx_set}
\end{figure}

Results are reported in Figure \ref{fig:LCvsKL}. In the left plot, we compare the convergence of the error versus number
of points for the a-priori and a-posteriori ``I-set'' for LC and KL expansion;
we employ the same color-coding as in Figure \ref{fig:GHvsLJ} (blue for prior construction, red for the ``I-set'' of
the a-posteriori construction), using filled markers for LC results and empty markers for \rev{KL} results.
The lines with \rev{filled} markers are always significantly \rev{above} the lines with empty markers, i.e.,
the convergence of the sparse grid adaptive algorithm is significantly faster for the
KL expansion than for the LC expansion.
This can easily be explained by the implicit ordering introduced by the
KL expansion in the importance of the random variables: because the modes of the KL are ordered in descending order
according to the percentage of variance of the random field they represent, they are already ordered in
a suitable way for the adaptive algorithm, which from the very start can explore informative directions of variance
(although the KL expansion is optimized for the representation of the input rather than for the output).
The LC expansion instead uses a-priori choices of the \rev{expansion basis functions} 
and in particular batches (of increasing cardinality) of \rev{those basis functions} 
are equally important (i.e., the wavelets at the same refinement level). On the other hand,
the adaptive algorithm explores random variables in the expansion order, which means that sometimes the algorithm
has to include ``\rev{unnecessary}'' modes of the LC expansion before finding those that really matter.

Of course, a careful implementation of the adaptive algorithm can, to a certain extent,  mitigate this issue.
In particular, increasing the size of the buffer of random variables (cf.\ the description at the beginning of Section~4) improves the performance of the adaptive algorithm. The default number of inactive random variables is 5---the convergence lines in the left plot are obtained in this way. 
In the middle plot we confirm that, as expected, increasing the buffer from 5 to 20 random variables improves the performance of the sparse grid approximation when applied to the LC case
(black line with filled markers instead of red line with filled markers).
Note, however, that a significant gap remains between the convergence of the sparse grid approximation for the LC expansion with a buffer of 20 random variables and the convergence of the sparse grid for the KL expansion. 
This means that not only does the buffer play a role, but the KL expansion is overall a more convenient basis to work with.

This aspect is further elaborated in the right plot of Figure~\ref{fig:LCvsKL}. 
Here we show the convergence of the sparse grid approximation for KL (5-variable buffer) and LC (either 5-variable or 20-variable buffer) against the number of indices in the sparse grids (dashed lines with markers), and compare this convergence against an estimate of the corresponding best-N-term (bNt) expansion of the solution in Hermite polynomials (full lines without markers); different colors identify different expansions. 
Of course, the convergence of the bNt expansion also depends on the LC/KL basis, therefore we show two bNt convergence curves.
The bNt was computed by converting the sparse grid into the equivalent Hermite expansion (see \cite{feal:compgeo,lever.eal:inversion} for details) and then rearranging the Hermite coefficients \rev{in order of decreasing magnitude}. 
The plot shows that the sparse grid approximation of the solution by KL expansion is quite close to the bNt convergence (blue lines), which means that there is not much room for ``compressibility'' in the sparse grid approximation. 
Conversely, the 5-variable-buffer sparse grid approximation of the problem with LC expansion
is somehow far from the bNt (red lines) and only the 20-variable-buffer (black dashed line) gets reasonably close: this means that the 5-variable-buffer is ``forced'' to add to the approximation ``useless'' indices merely because the ordering of the variables in the LC expansion is not optimal and the buffer is not large enough.

Finally, we report in Figure \ref{fig:multiindx_set} some performance indicators for the construction of the index set for the KL and LC cases, which offer further insight towards explaining the superior KL performance.
The figure on the left shows the growth of the size of the outer margin of the dimension-adaptive algorithm at each iteration, where we recall that one iteration is defined as the process of selecting one index from the outer margin and evaluating the error indicator for all its forward neighbors; this in particular means that the number of PDE solves per iteration is not fixed. 
All three algorithms (KL, 5-variable-buffer LC and 20-variable-buffer LC) stop after 10,000 PDE solves.
KL displays the fastest growth in the outer margin size, followed by LC20 and then LC5, which is perhaps counter-intuitive; on the other hand, the more indices are considered, the more likely it is to find ones ``effective'' in reducing the approximation error. 
The figure in the center shows the growth in the number of explored dimensions: again, KL has the quickest and steadiest growth, which means that the algorithm favors
adding new variables over exploring those already active. 
This might be again counter-intuitive, but there is no contradiction between this \rev{observation} and the superior performance of KL: the point here is actually
precisely the fact that the LC random variables are not \rev{conveniently} sorted, so the algorithm is \rev{obliged} to explore those already available rather than adding new ones; this is especially visible
for the LC5 case, which displays a significant plateau in the growth in the number of variables in the middle of the algorithm execution. 
The three plots on the right finally show the largest component of multi-index $\mathbf{\nu}_N^*$
  that has been selected from the reduced margin at iteration $N$ for the three algorithms (from the top: KL, LC5, LC20):
  a large maximum component means that the algorithm has favored exploring variables already activated,
  while if the maximum component is equal to 2 the algorithm has activated a new random variable
  (\rev{indices} start from 1 in the Sparse Grids Matlab Kit). 
Most of the values in these plots are between 2 and 3,
  which again shows that the algorithms favor adding new variables rather than exploring those already available.
Finally, we mention (plot omitted for brevity) that despite the relatively large number of random variables
  activated, each tensor grid in the sparse grid construction is at most 4-dimensional\footnote{\rev{In other words, out of the $M$ random variables considered, only four are simultaneously activated to build the tensor grids---which four of course depends on each tensor grid.}}, which means that \rev{interactions between five or more of the random variables appearing in the KL or LC expansion, respectively, are considered negligible by the algorithm. }
%

\section{Conclusions}\label{sec:concl}
In this contribution we have investigated some practical choices related to the numerical approximation of random elliptic PDEs with lognormal diffusion coefficients by sparse grid collocation methods.
More specifically, we discussed two issues, namely 
a) whether it pays off from a computational point of view to replace the classical
Karhunen–Loève expansion of the log-diffusion field with the L\'evy–Ciesielski expansion, as  advocated in [2] for theoretical purposes and 
b) what type of univariate interpolation node sequence should be used in the sparse grid construction, choosing among Gauss–-Hermite, Gaussian Leja and Genz–-Keister points.  
\rev{Following a brief digression into the issue of } convergence of
interpolation and quadrature of univariate and multivariate functions based on these three classes of nodes, we compared the performance of sparse grid collocation for the approximate solution of the lognormal random PDEs in a number of different cases.  
The \rev{computational experiments} suggest that Gaussian Leja collocation points, \rev{due to their approximation properties, granularity and nestedness, are the superior choice} for the sparse grid approximation of the random PDE under consideration, and that the Karhunen–Lo\`eve expansion \rev{offers a computationally  more effective parametrization of the input random field}
than the L\'evy–Ciesielski expansion.

\begin{acknowledgement}
The authors would like to thank Markus Bachmayr and Giovanni Migliorati for helpful discussions and Christian J\"ah for Proposition \ref{propo:jaeh}.
Bj\"orn Sprungk is supported by the DFG research project 389483880.
Lorenzo Tamellini has been supported by the GNCS 2019 project
  ``Metodi numerici non-standard per PDEs: efficienza, robustezza e affidabilit\`a''
  and by the PRIN 2017 project ``Numerical Analysis for Full and Reduced Order Methods for the efficient
  and accurate solution of complex systems governed by Partial Differential Equations''.
\end{acknowledgement}

\section*{Appendix} \label{ErnstEtAl:sec:app}
\addcontentsline{toc}{section}{Appendix}
We show that the Karhunen--Lo\`eve expansion of the Brownian bridge discussed in Example \ref{exam:BB} does not satisfy the conditions of Theorem \ref{theo:Bachmayr_reg} for $p>0$. 
To this end, we first state

\begin{proposition}\label{propo:jaeh}
Let $(b_m)_{m\in\bbN}$ be a monotonely decreasing sequence of real numbers with $\lim_{m\to\infty} b_m = 0$.
Then for any $\theta \in [0,2\pi]$ we have
\[
	\sum_{m\geq1} b_m \sin (m \theta) < \infty.
\]
\end{proposition}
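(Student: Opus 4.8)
The plan is to recognize this as a textbook application of \emph{Dirichlet's test} for the convergence of series, whose two hypotheses are a monotone null sequence (which is precisely our assumption on $(b_m)$, since a monotonely decreasing sequence tending to $0$ is nonnegative and decreases to $0$) and a second sequence whose partial sums are uniformly bounded. The entire task therefore reduces to controlling the partial sums $S_N(\theta) := \sum_{m=1}^N \sin(m\theta)$ of the sine factors. First I would dispose of the two degenerate cases $\theta = 0$ and $\theta = 2\pi$: here every term $\sin(m\theta)$ vanishes, so the series is identically zero and there is nothing to prove. For the remaining $\theta \in (0,2\pi)$ we have $\sin(\theta/2)\neq 0$, which is exactly what makes the partial-sum bound available.

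The key step is the closed-form estimate for $S_N(\theta)$. I would multiply by $2\sin(\theta/2)$ and apply the product-to-sum identity $2\sin(m\theta)\sin(\theta/2) = \cos\bigl((m-\tfrac12)\theta\bigr) - \cos\bigl((m+\tfrac12)\theta\bigr)$, which turns the sum into a telescoping one and yields
\[
	2\sin(\tfrac\theta2)\, S_N(\theta) = \cos(\tfrac\theta2) - \cos\bigl((N+\tfrac12)\theta\bigr),
\]
so that $|S_N(\theta)| \le 1/|\sin(\theta/2)|$ uniformly in $N$. I would explicitly note that this bound depends on $\theta$ and blows up as $\theta \to 0^+$ or $\theta \to 2\pi^-$; this is harmless, however, because the proposition only claims convergence for each \emph{fixed} $\theta$, so a finite $\theta$-dependent constant is all that is required.

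Finally I would conclude by Abel summation (summation by parts). Writing
\[
	\sum_{m=1}^N b_m \sin(m\theta) = \sum_{m=1}^{N-1} S_m(\theta)\,(b_m - b_{m+1}) + S_N(\theta)\, b_N,
\]
and using $|S_m(\theta)| \le C := 1/|\sin(\theta/2)|$, the nonnegative increments $b_m - b_{m+1}$ telescope to the finite total $b_1$, so $\sum_m S_m(\theta)(b_m - b_{m+1})$ converges absolutely, while the boundary term $S_N(\theta)\,b_N \to 0$ since $b_N \to 0$. Hence the partial sums form a Cauchy sequence and the series converges.

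As for the main obstacle: there is honestly no deep difficulty here, as this is a classical result. The only points requiring care are the telescoping identity for $S_N(\theta)$ and the observation that its bound, though $\theta$-dependent, is finite for each admissible $\theta$ in $(0,2\pi)$; everything else is a routine invocation of Dirichlet's test.
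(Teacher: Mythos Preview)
Your proof is correct and follows essentially the same route as the paper: both invoke Dirichlet's test, dispose of the endpoints $\theta\in\{0,2\pi\}$ where every term vanishes, and for $\theta\in(0,2\pi)$ bound the partial sums $S_N(\theta)$ via the same telescoping/Lagrange identity (the paper states it as $S_N(\theta)=\tfrac12\cot(\theta/2)-\cos((N+\tfrac12)\theta)/(2\sin(\theta/2))$, which is your formula divided through by $2\sin(\theta/2)$). Your additional Abel-summation paragraph simply unpacks the proof of Dirichlet's test rather than citing it, so the arguments coincide.
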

\begin{proof}
Dirichlet's test for the convergence of series implies the statement if there exists a constant $K<\infty$ such that
\[
	\left| \sum_{m=1}^M \sin (m \theta) \right| \leq K \qquad \forall M\in\bbN.
\]
Now, Lagrange's trigonometric identity tells us that
\[
	\sum_{m=1}^M \sin (m \theta)
	=
	\frac 12 \cot(0.5\theta) - \frac{\cos\left((M+0.5) \theta \right)}{2\sin(0.5\theta)},
	\qquad
	\theta \in (0,2\pi).
\]
Hence, since $\sin (m 0) =  \sin (m 2\pi) = 0$ the statement follows easily.
\end{proof}

\begin{proposition}\label{propo:BB_KLE}
Given the Karhunen--Lo\`eve expansion of the Brownian bridge as in \eqref{equ:BB_KLE}, the function
\[
	k_{\vtau}(x) := \sum_{m=1}^\infty \tau_m \frac{\sqrt2}{\pi m} \sin(m\pi x),
	\qquad
	x\in D=[0,1],
\]
is pointwise well-defined for $\tau_m = m^{1/q}$ with $q>1$ in which case $(\tau_m^{-1})_{m\in\bbN} \in \ell^p(\bbN)$ for any $p>q>1$.
However, assuming that $k_{\vtau}\colon [0,1]\to\bbR$ is well-defined for a sequence $\vtau = (\tau_m)_{m\in\bbN}$ with $(\tau^{-1}_m)_{m\in\bbN} \in \ell^p(\bbN)$ for a $p \leq 2$, then $k_{\vtau} \notin L^\infty(D)$.
\end{proposition}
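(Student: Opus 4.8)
The plan is to treat the two assertions separately, the first being a direct application of Proposition~\ref{propo:jaeh} and the second resting on a short Cauchy--Schwarz argument.

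For the first assertion I would observe that with $\tau_m = m^{1/q}$ the coefficient of $\sin(m\pi x)$ in $k_{\vtau}$ equals $b_m := \frac{\sqrt2}{\pi}\,m^{1/q-1}$, and for $q>1$ the sequence $(b_m)_{m\in\bbN}$ is nonnegative, monotonically decreasing, and tends to $0$. Setting $\theta = \pi x \in [0,\pi]\subset[0,2\pi]$ for $x\in D=[0,1]$, Proposition~\ref{propo:jaeh} shows that $\sum_{m\ge1} b_m \sin(m\theta)$ converges, so $k_{\vtau}$ is pointwise well-defined on $D$. The membership $(\tau_m^{-1})_{m\in\bbN} = (m^{-1/q})_{m\in\bbN}\in\ell^p(\bbN)$ for every $p>q$ is then just the elementary $p$-series criterion, since $\sum_m m^{-p/q}<\infty$ precisely when $p/q>1$.

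For the second assertion I would argue by contradiction, assuming $k_{\vtau}\in L^\infty(D)$. Because $D$ has finite measure, $L^\infty(D)\subset L^2(D)$, and in the orthonormal sine basis $\{\sqrt2\sin(m\pi\cdot)\}_{m\in\bbN}$ of $L^2(D)$ the expansion coefficients of $k_{\vtau}$ are $\tau_m/(\pi m)$; Parseval's identity then forces $\sum_{m\ge1}\tau_m^2/m^2<\infty$. On the other hand, $(\tau_m^{-1})_{m\in\bbN}\in\ell^p(\bbN)$ with $p\le2$ implies $(\tau_m^{-1})_{m\in\bbN}\in\ell^2(\bbN)$, i.e.\ $\sum_{m\ge1}\tau_m^{-2}<\infty$. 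Combining the two facts through the Cauchy--Schwarz inequality,
\[
	\sum_{m=1}^\infty \frac1m
	= \sum_{m=1}^\infty \frac{\tau_m}{m}\cdot\frac{1}{\tau_m}
	\le \Bigl( \sum_{m=1}^\infty \frac{\tau_m^2}{m^2}\Bigr)^{1/2}\Bigl( \sum_{m=1}^\infty \tau_m^{-2}\Bigr)^{1/2}
	< \infty,
\]
contradicting the divergence of the harmonic series. Hence the assumption $k_{\vtau}\in L^\infty(D)$ is untenable.

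The step I expect to be the genuine obstacle is the identification of the $L^2(D)$-coefficients of $k_{\vtau}$ with $\tau_m/(\pi m)$: the hypothesis only provides everywhere pointwise convergence of the sine series, and pointwise convergence alone neither licenses term-by-term integration against $\sin(n\pi\cdot)$ nor guarantees that the partial sums converge to $k_{\vtau}$ in $L^2(D)$ — indeed their $L^2$-norms diverge, by the very same Cauchy--Schwarz computation applied to finite sums. Since no monotonicity of $(\tau_m)_{m\in\bbN}$ is assumed, the cleanest way to close this gap is to invoke the uniqueness theorem for trigonometric series (de la Vall\'ee Poussin): the series $\sum_m c_m\sin(m\pi\cdot)$ with $c_m = \frac{\sqrt2}{\pi}\tau_m/m$ converges everywhere to the integrable function $k_{\vtau}\in L^\infty(D)\subset L^1(D)$, hence it is the Fourier sine series of $k_{\vtau}$ and the $c_m$ are genuinely its Fourier coefficients, which is exactly what Parseval's identity requires above. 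Everything else is routine.
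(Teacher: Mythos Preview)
Your argument is correct, and for the first assertion it is verbatim the paper's: apply Proposition~\ref{propo:jaeh} to the monotone null sequence $b_m = \frac{\sqrt2}{\pi}\,m^{1/q-1}$.

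For the second assertion both you and the paper argue by contradiction via the embedding $L^\infty(D)\subset L^2(D)$ and Parseval to obtain $\sum_{m\ge1}\tau_m^2/m^2<\infty$, but the final steps differ. The paper asserts that $\sum_{m\ge1}\tau_m^2/m^2<\infty$ forces $\tau_m^2\le c m$ for some constant $c$, and then deduces $\tau_m^{-p}\ge c^{-p/2}m^{-p/2}$, whence $(\tau_m^{-1})\notin\ell^p$ for $p\le2$. That intermediate implication is not justified as written: summability of a nonnegative sequence $(a_m)$ does not entail $a_m\le c/m$ for all $m$. Your Cauchy--Schwarz step
\[
	\sum_{m\ge1}\frac1m \le \Bigl(\sum_{m\ge1}\frac{\tau_m^2}{m^2}\Bigr)^{1/2}\Bigl(\sum_{m\ge1}\tau_m^{-2}\Bigr)^{1/2}
\]
bypasses this entirely and is the cleaner route; combined with the inclusion $\ell^p\subset\ell^2$ for $p\le2$ it yields the contradiction immediately.

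You are also right to flag the identification of the Fourier coefficients as the one nontrivial point---the paper simply writes $\|k_{\vtau}\|_{L^2(D)}^2 = \frac{1}{\pi^2}\sum_{m\ge1}\tau_m^2/m^2$ without comment. Your appeal to the de~la~Vall\'ee~Poussin uniqueness theorem (everywhere convergence of a trigonometric series to an integrable function implies it is that function's Fourier series) is exactly what is needed to close this, since the hypothesis gives pointwise convergence on all of $[0,1]$ and $k_{\vtau}\in L^\infty(D)\subset L^1(D)$.
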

\begin{proof}
The first statement follows by Proposition \ref{propo:jaeh} and $\frac{\sqrt2}{\pi m} \tau_m = C m^{1/q-1} \to 0$ as $m\to \infty$.
The second statement follows by contracdiction. Assume that $k_{\vtau} \in L^\infty(D)$, then also $k_{\vtau} \in L^2(D)$ and via $\|k_{\vtau}\|_{L^2(D)} = \frac 1{\pi^2} \sum_{m=1}^\infty \frac{\tau^2_m}{m^2}$ we have that $\tau^2_m \leq cm$ for a $c \geq 0$---otherwise $\|k_{\vtau}\|_{L^2(D)} = +\infty$. 
Thus, $\tau_m^{-p} \geq c^{-p/2} m^{-p/2}$ and since $\sum_{m\geq 1}m^{-p/2} < +\infty$ if and only if $p > 2$, we end up with $(\tau_m^{-1})_{m\in\bbN} \notin \ell^2(\bbN)$.
\end{proof}

For values $p>2$ we provide the following numerical evidence: we choose 
$\tau_m = m^{1/p}$, i.e., $(\tau_m^{-1})_{m\in\bbN}\in \ell^{p+\epsilon}(\bbN)$, $\epsilon>0$, and compute the values of the function 
$\kappa_{\vtau}(x)$ as given in Proposition \ref{propo:BB_KLE} in a neighborhood of $x=0$ numerically. 
The reason we are interested in small values of $x$ is the fact that $\kappa_{\vtau}(x)$, $x\neq 0$, can be bounded by $\frac 12 \cot(0.5\pi x) + \frac{1}{2\sin(0.5\pi x)}$ by means of Proposition \ref{propo:jaeh}.
Thus, we expect a blow-up for small values of $x$.
Indeed, we observe numerically that $\kappa_{\vtau}(x)$ for $\tau_m = m^{1/p}$ behaves like $x^{-1/p}$ for small values of $x>0$, see Figure \ref{fig:Kappa_tau}. 
This implies that $\kappa_{\vtau}$ is unbounded in a neighborhood of $x=0$ for any of the above choices of $\tau_m$ and, therefore, does not satisfy the conditions of Theorem \ref{theo:Bachmayr_reg}.

\begin{figure}
\begin{minipage}[c]{0.5\textwidth}
\centering
\includegraphics[width=\textwidth]{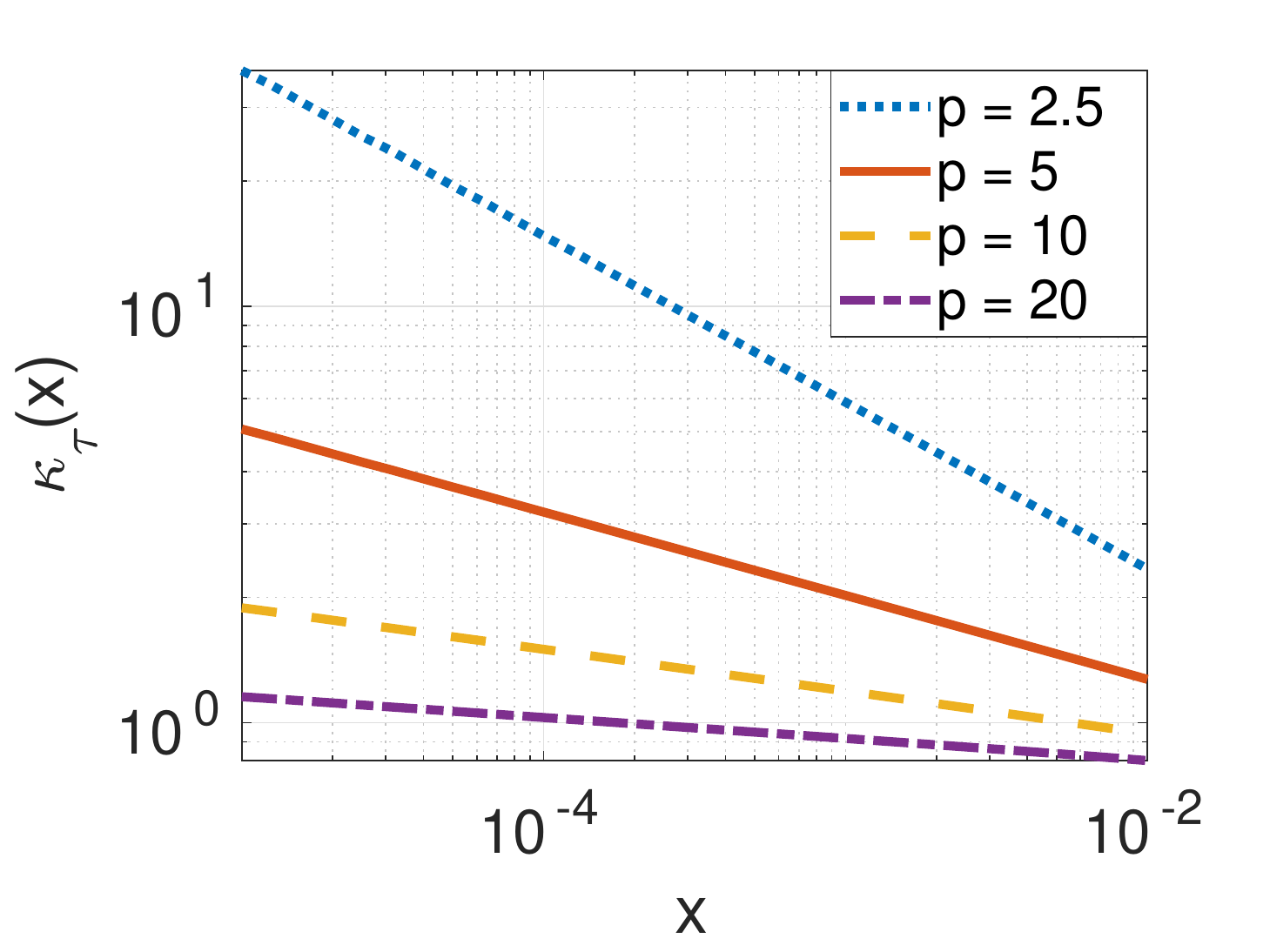}
\end{minipage}
\begin{minipage}[b]{0.49\textwidth}
\phantom{a}
\caption{Growth of $\kappa_{\vtau}(x)$ as given in Proposition \ref{propo:BB_KLE} for decaying $x\to 0+$ and choices $\tau_m = m^{1/p}$ with various values of $p$---the observed growth matches $x^{-1/p}$.}
\label{fig:Kappa_tau}
\end{minipage}
\end{figure}

\bibliography{literature}
\bibliographystyle{abbrv}
\end{document}